\documentclass[11pt]{article}
\usepackage{graphicx}
\usepackage{amsfonts,amsmath,amssymb,amscd,enumitem,color}
\usepackage{mathtools}
\usepackage{accents}
\usepackage{amsthm}
\usepackage[hidelinks]{hyperref}
\usepackage[capitalize]{cleveref}
\usepackage[english]{babel}
\usepackage[utf8]{inputenc}
 
\newtheorem{theorem}{Theorem}
\newtheorem{proposition}[theorem]{Proposition}
\newtheorem{definition}[theorem]{Definition}

\newtheorem{corollary}[theorem]{Corollary}

\numberwithin{subcase}{case}

\numberwithin{subsubcase}{subcase}

\usepackage{fullpage}

\author{Brett Leroux}

\def\keywords#1{\par\addvspace\medskipamount{\rightskip=0pt plus1cm
\def\and{\ifhmode\unskip\nobreak\fi\ $\cdot$
}\noindent{Keywords:}\enspace\ignorespaces#1\par}}

\title{The minimum neighborliness of a random polytope}

\usepackage{dirtytalk}

\def\final{0}  

\ifnum\final=0  
\newcommand{\lnote}[1]{[{\small Luis: \bf #1}]}
\newcommand{\bnote}[1]{[{\small Brett: \bf #1}]}
\newcommand{\anonnote}[1]{[{\small anon: \bf #1}]}
\newcommand{\sidecomment}[1]{\marginpar{\tiny #1}}
\newcommand{\details}[1]{{\color{blue}\ [[#1]] }}
\else 
\newcommand{\lnote}[1]{}
\newcommand{\bnote}[1]{}
\newcommand{\anonnote}[1]{}
\newcommand{\sidecomment}[1]{}
\newcommand{\details}[1]{}
\fi  


\newcommand{\Rl}{\operatorname{\mathbb{R}}}

\newcommand{\Tor}{\operatorname{\Tor}}

\newcommand{\aff}{\operatorname{aff}}
\newcommand{\conv}{\operatorname{conv}}

\newcommand{\interior}{\operatorname{int}}
\newcommand{\suchthat}{\mathrel{:}}

\newcommand{\ud}{\mathop{}\!\mathrm{d}}
\newcommand{\prob}{\mathbb{P}}

\makeatletter
\pdfstringdefDisableCommands{\let\(\fake@math}
\newcommand\fake@math{}
\def\fake@math#1\){[math]}
\makeatother
\usepackage{color}

\begin{document}
\maketitle
\begin{abstract}
Let $\mu$ be a probability distribution on $\Rl^d$ which assigns measure zero to every hyperplane and $S$ a set of points sampled independently from $\mu$. What can be said about the expected combinatorial structure of the convex hull of $S$? These polytopes are simplicial with probability one, but not much else is known except when more restrictive assumptions are imposed on $\mu$. In this paper we show that, with probability close to one, the convex hull of $S$ has a high degree of neighborliness no matter the underlying distribution $\mu$ as long as $n$ is not much bigger than $d$. As a concrete example, our result implies that if for each $d$ in $\mathbb{N}$ we choose a probability distribution $\mu_d$ on $\Rl^d$ which assigns measure zero to every hyperplane and then set $P_n$ to be the convex hull of an i.i.d. sample of $n \le 5d/4$ random points from $\mu_d$, the probability that $P_n$ is $k$-neighborly approaches one as $d \to \infty$ for all $k\le d/20$.
We also give a simple example of a family of distributions which essentially attain our lower bound on the $k$-neighborliness of a random polytope. 
\end{abstract}

\section{Introduction}

The most well-studied models of random polytopes are those where a random polytope is defined as the convex hull of a set of independent and identically distributed points from some probability distribution on the space $\Rl^d$. These objects have been studied for several reasons. One reason is that random polytopes can sometimes give us some insight into the possible metric or combinatorial properties of deterministic convex polytopes with a given dimension and number of vertices. Another is that convex polytopes have a wide range of applications in geometric algorithms including the simplex method \cite{MR0868467} and Wolfe’s method \cite{MR4526301}. For many such algorithms, the input data defines a convex polytope and it is useful to understand combinatorial and metric properties of that polytope in order to understand the complexity of the algorithm. Since algorithmic applications often assume that the input data is random according to some predetermined distribution, random polytopes are particularly relevant.

Gaussian random polytopes, i.e. random polytopes where the underlying distribution is the standard Gaussian distribution on $\Rl^d$ have received much attention \cite{MR1149653,MR2330981,MR2093024,MR2144555,MR0258089,MR0169139}. Perhaps the main reason for this focus on Gaussian random polytopes is that they coincide in distribution with uniform random projections of a simplex to some lower dimensional space \cite{MR1254086}. Another reason is that the Gaussian distribution has many nice properties which often make calculations simpler and behavior of combinatorial properties easier to determine. Other commonly studied families of random polytopes are those for which the underlying distribution is the uniform distribution on a convex body or the boundary of a convex body. Yet another important example is \emph{random $0/1$ polytopes} \cite{MR2144601} where the vertices are a random subset of the vertices of the $d$-dimensional cube. Some papers prove results which assume only that the underlying distribution has some property such as being log-concave \cite{BBB} or subgaussian \cite{MR2373017}.

In all of these examples, either the distribution is specified, or it satisfies some restrictive condition such as being subgaussian. Our main result in contrast assumes only that the distribution assigns measure zero to every (affine) hyperplane. With this assumption only, we consider random polytopes where the number of random points is proportional to the dimension and the dimension approaches infinity. The main property of convex polytopes we are interested in is $k$-neighborliness (defined in \cref{sec:Gauss}). One of our main results shows that if the constant of proportionality is less than two, then there exists some constant $\beta>0$ (depending on the constant of proportionality), such that the probability that the random polytope is at least $\lfloor \beta d\rfloor$-neighborly approaches one as the dimension approaches infinity (\cref{thm:neighborly}). 

To put this result in context, we need to review what has previously been known about the neighborliness of random polytopes. First we collect some notation.

\subsection{Notation}
Asymptotic notation $f(n) \sim g(n)$ means $f(n)/g(n) \to 1$ as $n \to \infty$. For a set of points $X \subset \Rl^d$, $\conv X$ is the convex hull of $X$. Similarly, $\aff X$ is the affine hull of $X$. The \emph{binary entropy function} is the function defined by $H(p) := -p\log_2 p -(1-p)\log_2(1-p)$. We use $o$ to denote the origin in $\Rl^d$. For a polytope $P$, we use the notation $f_\ell(P)$ for the number of $\ell$-dimensional faces of the polytope $P$. When we say that a probability distribution assigns measure zero to every hyperplane we mean every affine hyperplane (not just every linear hyperplane) unless otherwise specified.

\subsection{Previous work on the neighborliness of random polytopes}\label{sec:Gauss}

\begin{definition}
A polytope $P$ is \emph{$k$-neighborly} if every subset of at most $k$ vertices is a face of the polytope. 
\end{definition}

See \cite{MR1976856} for an introduction to polytopes and \cite[Chapter 7]{MR1976856} for $k$-neighborly polytopes in particular. 

In addition to our results about $k$-neighborliness, we will also consider another quantity associated to polytopes which measures how \say{close} the polytope is to being $k$-neighborly. 

\begin{definition}
For a simplicial polytope $P \subset \Rl^d$ with $n$ vertices and any $0\le \ell \le d-1$, the \emph{$\ell$-face density} of $P$ is $f_\ell(P)/\binom{n}{\ell+1}$.
\end{definition}

We see that the $\ell$-face density of a polytope measures how close to being $k$-neighborly the polytope is where $k = \ell+1$. If the $\ell$-face density is one, then the polytope is $(\ell+1)$-neighborly. In addition to showing that a random polytope is $k$-neighborly for a surprisingly large value of $k$, we will show that a random polytope has $\ell$-face density close to one where $\ell$ is even larger than $k$. 

It was perhaps Gale who first speculated that random polytopes should have a high degree of neighborliness when the dimension of the space is high \cite{MR0085552}. 
More recently, it has been rigorously proven that random polytopes from certain families of probability distributions tend to have a surprisingly high degree of neighborliness with high probability \cite{MR2796091,MR0927239,MR2546388,MR2168716,MR2449053,MR1976856,MR0085552,MR2373017,MR2206919,VS}. Two of these works, the paper \cite{MR2168716} of Donoho and Tanner, and the paper \cite{VS} of Vershik and Sporyshev are particularly relevant to this paper so we give an overview of their results.

A \emph{Gaussian random polytope} is the convex hull of an i.i.d. sample of points from the standard (mean zero and identity covariance matrix) Gaussian distribution on $\Rl^d$. Donoho and Tanner show in \cite{MR2168716} that there exists a function $\rho_{DT}(\delta)$ such that if $\rho<\rho_{DT}(\delta)$ and $G_{n,d}$ is a Gaussian random 
polytope in $\Rl^d$ with $n$ random points with $d \ge \delta n$, then the probability that $G_{n,d}$ is $\lfloor \rho d\rfloor$-neighborly approaches one as $d$ approaches infinity 
(\cite[Theorem 1]{MR2168716}). Furthermore, they show that if $\rho>\rho_{DT}(\delta)$, then the expected number of subsets of the points of size $\lfloor \rho d\rfloor$ which are not faces of the polytope approaches infinity as $d \to \infty$.

Vershik and Sporyshev establish a similar result for the $\ell$-face density. They show that there exists a function $\rho_{VS}(\delta)$ such that if $d =d(n)\sim \delta n$ and $\ell =\ell(n) \sim \rho d$ where $\rho<\rho_{VS}(\delta)$, then the expected $\ell$-face density of $G_{n,d}$ approaches one as $d$ approaches infinity and that if $\rho>\rho_{VS}(\delta)$, then the expected $\ell$-face density approaches zero as $d$ approaches infinity \cite[Theorem 1]{VS}.

\subsection{Our results}
Our two main results are similar to the two results explained in the previous section, the first due to Donoho and Tanner, and the second due to Vershik and Sporyshev. The main difference is that our results apply to any random polytope whose vertices are i.i.d. according to a probability distribution on $\Rl^d$ which assigns measure zero to every hyperplane. This is a very weak assumption on the distribution. In particular, it is the minimal assumption which guarantees that the random polytopes under consideration are simplicial with probability one. Not surprisingly, because of the generality of the distributions we consider, our result guarantees a lower degree of neighborliness than in the case where the distribution is Gaussian. 
\begin{theorem}\label{thm:neighborly}
Let $\alpha <2$ and assume that $\beta>0$ satisfies $\alpha H(\beta/\alpha)+(\alpha-\beta)(H(\frac{\alpha-1}{\alpha-\beta})-1)<0$, or equivalently, $\frac{\alpha^\alpha}{(\alpha-1)^{\alpha-1}2^\alpha} < \frac{\beta^\beta(1-\beta)^{1-\beta}}{2^\beta} $. For each $d\in \mathbb{N} $, let $\mu_d$ be a probability distribution on $\Rl^d$ which assigns measure zero to every hyperplane. Let $n:=n(d) \sim \alpha d$ and let $S_n$ be a set of $n$ independent and identically distributed points from $\mu_d$. Then for any sequence $k:= k(d)$ with $k \sim \beta d$, the probability that  $\conv S_n$ is $k$-neighborly approaches one as $d \to \infty$.
\end{theorem}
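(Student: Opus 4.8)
The plan is a first‑moment (union‑bound) argument, combined with a projection step that reduces the problem to a distribution‑free version of Wendel's theorem.

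First, because $\mu_d$ gives measure zero to every hyperplane, almost surely every $d+1$ of the sampled points are affinely independent; in particular $\conv S_n$ is simplicial, so every subset of the vertex set of a face is again a face. Hence, as soon as $n\ge k$, $\conv S_n$ fails to be $k$‑neighborly if and only if some set of exactly $k$ of the points fails to span a face, and by exchangeability
\[
\prob\bigl[\conv S_n \text{ is not } k\text{-neighborly}\bigr]\ \le\ \binom{n}{k}\,\prob\bigl[\{X_1,\dots,X_k\}\text{ does not span a face of }\conv S_n\bigr],
\]
where $X_1,\dots,X_n$ is the i.i.d.\ sample. It thus suffices to bound $\prob[\{X_1,\dots,X_k\}\text{ is not a face}]$ strongly enough that $\binom nk$ times it tends to $0$.

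Second, fix $T=\{X_1,\dots,X_k\}$, let $L$ be the linear span of $\{X_i-X_1\}_{i\le k}$, and let $\pi\colon\Rl^d\to Q:=\Rl^d/L$ be the quotient map, so $\dim Q=d-k+1$ and $T$ maps to a single point $\bar v:=\pi(\aff T)$. A linear functional on $\Rl^d$ that is constant on $T$ and strictly smaller on $X_{k+1},\dots,X_n$ is exactly a linear functional on $Q$ uniquely minimized at $\bar v$ over $\{\bar v\}\cup\pi(\{X_{k+1},\dots,X_n\})$; therefore $T$ spans a face of $\conv S_n$ if and only if $\bar v\notin\conv\{\pi(X_{k+1}),\dots,\pi(X_n)\}$. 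Conditioning on $\aff T$ — on which $X_{k+1},\dots,X_n$ remain i.i.d.\ $\mu_d$, being independent of $X_1,\dots,X_k$ — the points $Y_i:=\pi(X_{k+i})$ are i.i.d.\ from $\pi_*\mu_d$, which again assigns measure zero to every hyperplane of $Q$ (preimages of hyperplanes under $\pi$ are hyperplanes). So, with $d':=d-k+1$ and $m:=n-k$,
\[
\prob\bigl[T\text{ is not a face}\ \bigm|\ \aff T\bigr]\ =\ \prob\bigl[\bar v\in\conv\{Y_1,\dots,Y_m\}\bigr],
\]
a probability about $m$ i.i.d.\ points of an arbitrary hyperplane‑null distribution in $\Rl^{d'}$, and the bound I will prove for it does not depend on $\aff T$.

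Third — and this is the crux — I would prove: for \emph{every} distribution $\nu$ on $\Rl^{d'}$ assigning measure zero to every hyperplane, every $v\in\Rl^{d'}$, and $Y_1,\dots,Y_m$ i.i.d.\ $\nu$,
\[
\prob\bigl[v\in\conv\{Y_1,\dots,Y_m\}\bigr]\ \le\ 1-2^{-(m-1)}\sum_{i=0}^{d'-1}\binom{m-1}{i},
\]
the right‑hand side being exactly Wendel's value when $\nu$ is symmetric about $v$. Writing $Z_i=Y_i-v$, this says symmetric distributions minimize $\prob[0\notin\conv\{Z_1,\dots,Z_m\}]$. The approach is symmetrization: introduce i.i.d.\ Rademacher signs $\varepsilon_i$ independent of the $Z_i$; for any realization of the $Z_i$ in general position, Schl\"afli's count of the cells of an arrangement of $m$ hyperplanes through the origin in $\Rl^{d'}$ shows that exactly $2\sum_{i=0}^{d'-1}\binom{m-1}{i}$ of the $2^m$ sign vectors $\sigma$ satisfy $0\notin\conv\{\sigma_1Z_1,\dots,\sigma_mZ_m\}$, so $\e\,\mathbf{1}[0\notin\conv\{\varepsilon_1Z_1,\dots,\varepsilon_mZ_m\}]=2^{-(m-1)}\sum_{i<d'}\binom{m-1}{i}$ identically. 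It therefore remains to show that the all‑$(+1)$ sign vector is the likeliest, i.e.\ that $\sigma\mapsto\prob_Z[0\notin\conv\{\sigma_iZ_i\}]$ is maximized at $\sigma=(+1,\dots,+1)$; as this depends only on the number of $-1$'s in $\sigma$, it reduces to a monotonicity statement — reflecting one more sampled point through the origin can only make it less likely that the sample misses the origin — which I expect to be the main technical obstacle, to be handled by conditioning on all but one point and comparing $\nu$ on the cone $\pos\conv(\text{the other points})$ with $\nu$ on its reflection.

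Finally, substituting the third step into the first two and using the identity $1-2^{-(m-1)}\sum_{i=0}^{d'-1}\binom{m-1}{i}=2^{-(m-1)}\sum_{i=0}^{m-1-d'}\binom{m-1}{i}$ (with $m-1-d'=n-d-2$) gives
\[
\prob\bigl[\conv S_n\text{ not }k\text{-neighborly}\bigr]\ \le\ \binom{n}{k}\,2^{-(n-k-1)}\sum_{i=0}^{\,n-d-2}\binom{n-k-1}{i}.
\]
For $\alpha\le1$ the polytope is almost surely a simplex and the conclusion is immediate, so assume $1<\alpha<2$; one checks that the hypothesis forces $\alpha+\beta<2$, hence $\frac{n-d}{n-k}\to\frac{\alpha-1}{\alpha-\beta}<\frac12$, so the partial binomial sum is at most $2^{(n-k-1)H((n-d-2)/(n-k-1))}=2^{(\alpha-\beta)d\,H(\frac{\alpha-1}{\alpha-\beta})(1+o(1))}$, while $\binom nk=2^{\alpha d\,H(\beta/\alpha)(1+o(1))}$ and $2^{-(n-k-1)}=2^{-(\alpha-\beta)d(1+o(1))}$. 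The bound is therefore $2^{\,d(\alpha H(\beta/\alpha)+(\alpha-\beta)(H(\frac{\alpha-1}{\alpha-\beta})-1))(1+o(1))}$, and the exponent tends to $-\infty$ by the hypothesis (a short manipulation of $H(p)=-p\log_2 p-(1-p)\log_2(1-p)$ shows the bracket is negative precisely when $\frac{\alpha^\alpha}{(\alpha-1)^{\alpha-1}2^\alpha}<\frac{\beta^\beta(1-\beta)^{1-\beta}}{2^\beta}$), so the failure probability tends to $0$.
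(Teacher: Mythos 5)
Your overall architecture is exactly the paper's: union bound over $k$-subsets, the observation (via the face criterion $\aff K\cap\conv(V\setminus K)=\emptyset$) that after projecting along $\aff T$ the question becomes whether a fixed point lies in the convex hull of $n-k$ i.i.d.\ points from a hyperplane-null distribution in dimension $d-k+1$, a distribution-free Wendel-type upper bound for that containment probability, and then the same entropy asymptotics (your final bound $\binom{n}{k}2^{-(n-k-1)}\sum_{i=0}^{n-d-2}\binom{n-k-1}{i}$ is literally the paper's). The problem is the crux step, which you yourself flag: you do not prove the distribution-free Wendel bound, and the route you sketch for it does not work as stated. Your symmetrization correctly shows that, for each general-position realization, the average over all $2^m$ sign patterns of $\mathbf{1}[0\notin\conv\{\sigma_iZ_i\}]$ equals the Wendel value $2^{-(m-1)}\sum_{i<d'}\binom{m-1}{i}$. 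But the statement you then need --- that the all-$(+1)$ pattern is at least this average --- is precisely the theorem being proved (it is equivalent to the Wagner--Welzl bound for hyperplane-null measures), so the symmetrization by itself is circular rather than a reduction to something simpler. Worse, the monotonicity you propose to close it with (``reflecting one more sampled point can only make it less likely that the sample misses the origin'') is false in general: flipping all $m$ points reflects the whole sample, so the miss probability with $m$ flips equals that with $0$ flips, while intermediate flip counts typically give strictly smaller values (already visible in $\Rl^1$ with mass $p\neq 1/2$ on the positive half-line, where the miss probability with $j$ flips is $p^{m-j}(1-p)^j+p^j(1-p)^{m-j}$); hence the quantity is not monotone in the number of reflections, and a one-flip comparison conditioned on the remaining points (a mass comparison between a random cone and its reflection) cannot be iterated to reach an arbitrary sign pattern.

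By contrast, the paper proves the needed bound (its Theorem on hyperplane-null measures) by a completely different, and unconditional, argument: embed the $n$ points into a larger i.i.d.\ sample of $N$ points, pass to the Gale transform, use the one-to-one correspondence between $n$-subsets containing the origin in the interior of their convex hull and $(N-n)$-element faces of the dual (simplicial) polytope, bound the number of such faces by the cyclic polytope via the Upper Bound Theorem, and let $N\to\infty$; the cyclic-polytope face count then converges exactly to the Wendel expression. Everything downstream of your third step is fine and coincides with the paper (including the check that the hypothesis forces $(n-d-2)/(n-k-1)$ to stay below $1/2$, so the partial binomial sum is controlled by its top term and the entropy bound applies), but as written the proof has a genuine gap at the one lemma that carries all the content; either supply a correct proof of the extremal claim for sign patterns or replace that step by an argument of the Gale-transform/Upper-Bound-Theorem type.
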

\begin{figure}[h]
    \centering
    \includegraphics[width=0.6\textwidth]{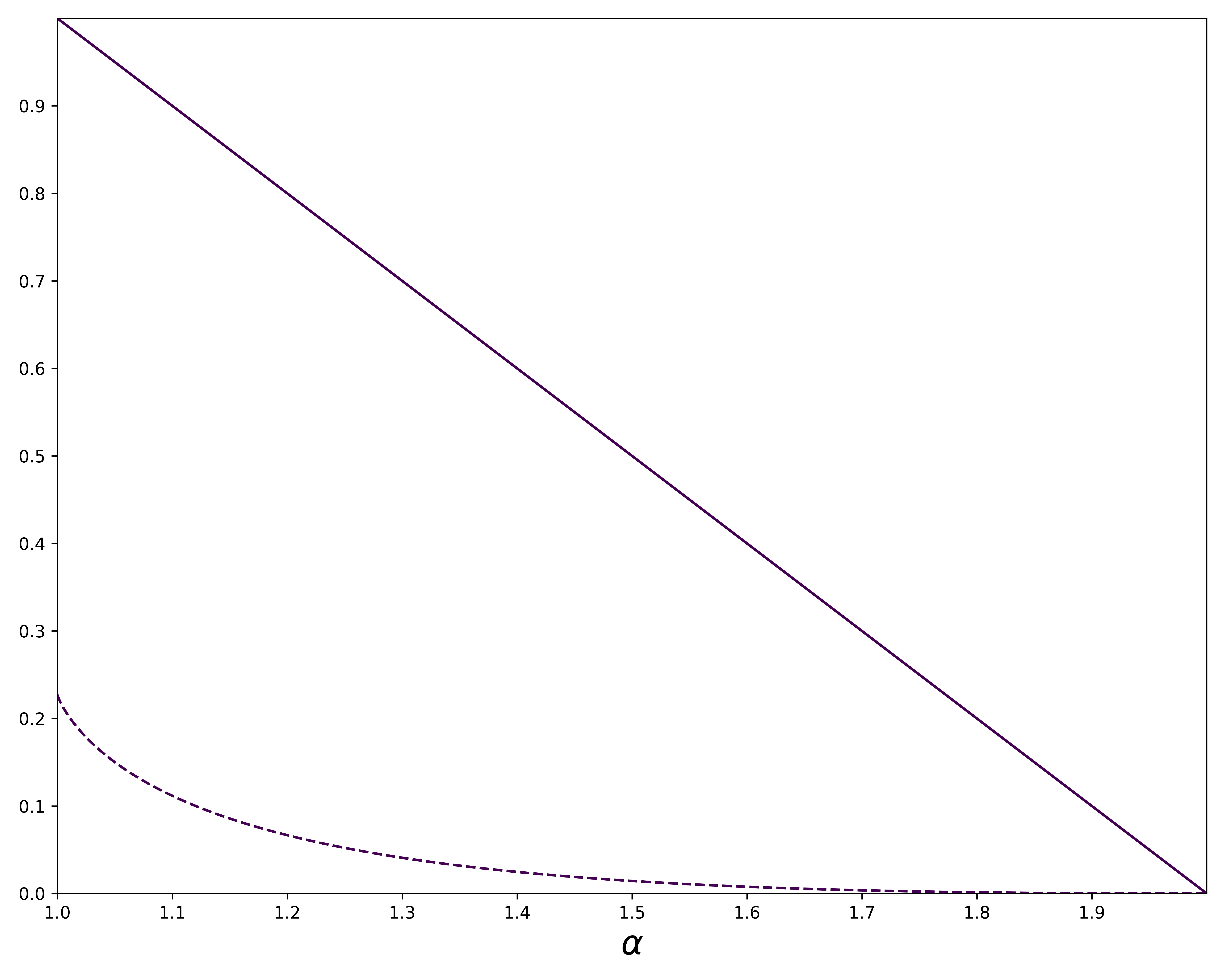}
    \caption{$\rho_N'$ (dashed) and $\rho_D'$. The lower curve shows that value of $\rho_N'(\alpha)$ for each $\alpha \in (1,2)$ which is the lower bound on the neighborliness of a random polytope with $\sim \alpha d$ vertices. The upper curves shows the value of $\rho_D'(\alpha)$ for each $\alpha \in (1,2)$ which is the lower bound on the $\ell$-face density.}
    \label{fig:alpha}
\end{figure}
In \cref{thm:neighborly} the equation $\alpha H(\beta/\alpha)+(\alpha-\beta)(H(\frac{\alpha-1}{\alpha-\beta})-1)=0$ implicitly determines a function which we denote $\rho_N'(\alpha)$ and which is plotted in \cref{fig:alpha}. By \cref{thm:neighborly}, the function $\rho_N'(\alpha)$ has the property that if $\rho< \rho_N'(\alpha)$, then the probability that a random polytope in $\Rl^d$ with $\sim \alpha d$ vertices will be at least $\lfloor \rho d\rfloor$-neighborly approaches one as $d \to \infty$. We have a similar result for the $\ell$-face density: 

\begin{theorem}\label{thm:density}
Let $\alpha<2$ and $0<\beta <2-\alpha$. For each $d\in \mathbb{N} $, let $\mu_d$ be a probability distribution on $\Rl^d$ which assigns measure zero to every hyperplane. Let $n:=n(d) \sim \alpha d$ and let $S_n$ be a set of $n$ independent and identically distributed points from $\mu_d$. Then for any sequence $\ell:= \ell(d)$ with $\ell \sim \beta d$, the expected $\ell$-face density of $\conv S_n$ approaches one as $d \to \infty$.
\end{theorem}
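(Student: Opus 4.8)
The plan is to reduce, by linearity of expectation and a projection, to a single Wendel‑type inequality valid for \emph{every} distribution, and then close the estimate with a Chernoff bound. Write $P_n:=\conv S_n$ and $m:=n-\ell-1$, and assume $1<\alpha<2$ (for $\alpha\le 1$ the polytope $P_n$ is almost surely a simplex and the $\ell$‑face density is identically $1$). Since $\mu_d$ charges no hyperplane, $P_n$ is simplicial almost surely, so $f_\ell(P_n)$ is the number of $(\ell+1)$‑element subsets $F\subseteq S_n$ with $\conv F$ a face of $P_n$. By exchangeability of the sample points, $\e\, f_\ell(P_n)=\binom{n}{\ell+1}\,\prob[\conv\{p_1,\dots,p_{\ell+1}\}\text{ is a face of }P_n]$, hence the expected $\ell$‑face density equals $\prob[\conv\{p_1,\dots,p_{\ell+1}\}\text{ is a face}]$, and it suffices to show this probability tends to $1$.

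Next condition on $p_1,\dots,p_{\ell+1}$, which are almost surely affinely independent, so $A:=\aff\{p_1,\dots,p_{\ell+1}\}$ is an $\ell$‑flat. Using general position of the sample one checks that $\conv\{p_1,\dots,p_{\ell+1}\}$ is a face of $P_n$ iff some hyperplane containing $A$ has all of $S_n$ in one closed halfspace. Projecting along $A$ via $\pi\colon\Rl^d\to\Rl^d/\vec A\cong\Rl^{d-\ell}$, the flat $A$ collapses to a point $x$, the remaining points $p_{\ell+2},\dots,p_n$ map to $r_1,\dots,r_m$, and the condition becomes $x\notin\interior\conv\{r_1,\dots,r_m\}$. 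Conditionally on $p_1,\dots,p_{\ell+1}$ the $r_i$ are i.i.d.\ from $\nu:=\pi_*\mu_d$, which again charges no hyperplane of $\Rl^{d-\ell}$ (the $\pi$‑preimage of such a hyperplane is a hyperplane of $\Rl^d$); in particular $\nu$ has no atoms, so almost surely $x\notin\partial\conv\{r_i\}$. Thus it is enough to bound $\prob[x\notin\conv\{r_1,\dots,r_m\}]$ from below \emph{uniformly} over the point $x$ and the distribution $\nu$.

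The estimate needed is the following Wendel‑type inequality: if $r_1,\dots,r_m$ are i.i.d.\ from any distribution on $\Rl^{D}$ charging no hyperplane and $x\in\Rl^{D}$ is arbitrary, then $\prob[x\notin\conv\{r_1,\dots,r_m\}]\ge 2^{1-m}\sum_{j=0}^{D-1}\binom{m-1}{j}$. For distributions symmetric about $x$ this is exactly Wendel's theorem; the content is that the symmetric case is extremal, i.e.\ (translating $x$ to $0$) that symmetrising $\nu$ can only increase the chance that $r_1,\dots,r_m$ positively span $\Rl^{D}$. I expect this to be \emph{the} main obstacle. The naive route — introduce i.i.d.\ signs, note that $(\varepsilon_i r_i)$ is i.i.d.\ from the symmetrisation, and condition on the lines $\Rl r_i$ — fails, since conditionally on those lines the all‑positive sign pattern need not be realised with above‑average probability. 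Instead one should mimic Wendel's own inductive proof, inducting on $m+D$ via $\binom{m}{\le D-1}=\binom{m-1}{\le D-1}+\binom{m-1}{\le D-2}$: on adding $r_m$, the case where the hyperplane $r_m^{\perp}$ crosses the cone of functionals positive on $r_1,\dots,r_{m-1}$ is handled by the inductive hypothesis in dimension $D-1$ (projecting onto $r_m^{\perp}$ again gives a distribution charging no hyperplane), the case where it does not contributes a full point, and the asymmetry of $\nu$ is absorbed by a correlation inequality between the conditional sign‑bias of $r_m$ and the effect of appending $\pm r_m$. (For $D=2$ this is clean: when $\{r_i\}$ lies in a semicircle there is a unique ``first'' point, so the indicator of being one‑sided is a sum of $m$ exchangeable indicators, each of probability $\e[h(r_1)^{m-1}]\ge(\e h(r_1))^{m-1}=2^{1-m}$ by Jensen, with $\e h(r_1)=\tfrac12$ by exchangeability; the difficulty in general is to isolate an analogous canonical witness.)

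Finally, apply the lemma with $D=d-\ell$ and $m=n-\ell-1$, so that $2^{1-m}\sum_{j=0}^{D-1}\binom{m-1}{j}=\prob[\operatorname{Bin}(m-1,\tfrac12)\le D-1]$. Since $n\sim\alpha d$ and $\ell\sim\beta d$ we have $(D-1)/(m-1)\to(1-\beta)/(\alpha-\beta)$, and $(1-\beta)/(\alpha-\beta)>\tfrac12$ precisely because $\beta<2-\alpha$ (equivalently $2-\beta>\alpha$). Hence $D-1\ge(\tfrac12+c)(m-1)$ for some fixed $c>0$ and all large $d$, and $m\to\infty$ since $\alpha>\beta$, so Hoeffding's inequality gives $\prob[\operatorname{Bin}(m-1,\tfrac12)\le D-1]\ge 1-e^{-2c^2(m-1)}\to 1$. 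Therefore $\prob[\conv\{p_1,\dots,p_{\ell+1}\}\text{ is a face}]\to 1$ uniformly in the conditioning, and the expected $\ell$‑face density of $\conv S_n$ tends to $1$, proving \cref{thm:density}.
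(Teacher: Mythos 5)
Your overall strategy is the same as the paper's: reduce the expected $\ell$-face density to the probability that one fixed $(\ell+1)$-subset is a face, characterize that event via the affine hull meeting the convex hull of the remaining points, project along the affine hull to dimension $d-\ell$, apply a Wendel-type bound that is valid for \emph{every} distribution assigning measure zero to hyperplanes, and finish with a binomial tail estimate using $\beta<2-\alpha$ (your Hoeffding step is equivalent to the paper's entropy-function estimate, and your index bookkeeping $m=n-\ell-1$, $D=d-\ell$ reproduces the paper's bound exactly). The problem is that the one ingredient you yourself identify as ``the main obstacle'' --- the inequality $\prob[x\notin\conv\{r_1,\dots,r_m\}]\ge 2^{1-m}\sum_{j=0}^{D-1}\binom{m-1}{j}$ for an arbitrary hyperplane-null distribution on $\Rl^D$ --- is asserted, not proved. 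Your sketch of an induction mimicking Wendel's argument is explicitly incomplete: you concede that the naive symmetrization fails, that the inductive step needs an unspecified correlation inequality, and that you can only ``isolate a canonical witness'' when $D=2$. Since the theorem's hypothesis (measure zero on every hyperplane) is strictly weaker than absolute continuity, you also cannot simply quote Wagner--Welzl's Corollary 3.7; some genuinely new argument is required, and this is precisely where the paper does its work.

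The paper closes this gap (its Theorem on nonatomic measures) by a route entirely different from symmetrization or induction on $m+D$: it embeds the $m$ points into a larger i.i.d.\ sample of $N$ points, passes to the Gale transform, where ``the origin lies in the interior of the convex hull of a given $m$-subset'' corresponds to ``the complementary $(N-m)$-subset is a face of the dual polytope,'' bounds the number of such faces by the Upper Bound Theorem (cyclic polytope face numbers), divides by $\binom{N}{m}$, and lets $N\to\infty$ (along a parity class) to recover exactly the Wendel expression. This combinatorial counting argument sidesteps the extremality/correlation issues your sketch runs into, and is the substantive content you would need to supply to make your proof complete. A separate, minor point: your dismissal of $\alpha\le 1$ on the grounds that $P_n$ is ``almost surely a simplex'' is wrong at $\alpha=1$, where $n\sim d$ still permits $n>d+1$; this case is harmless (your main estimate, like the paper's, covers it since $(n-d-2)/(n-\ell-2)\to 0$), but as written the case split is inaccurate.
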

\begin{figure}[h]\label{fig:2}
    \centering
    \includegraphics[width=0.6\textwidth]{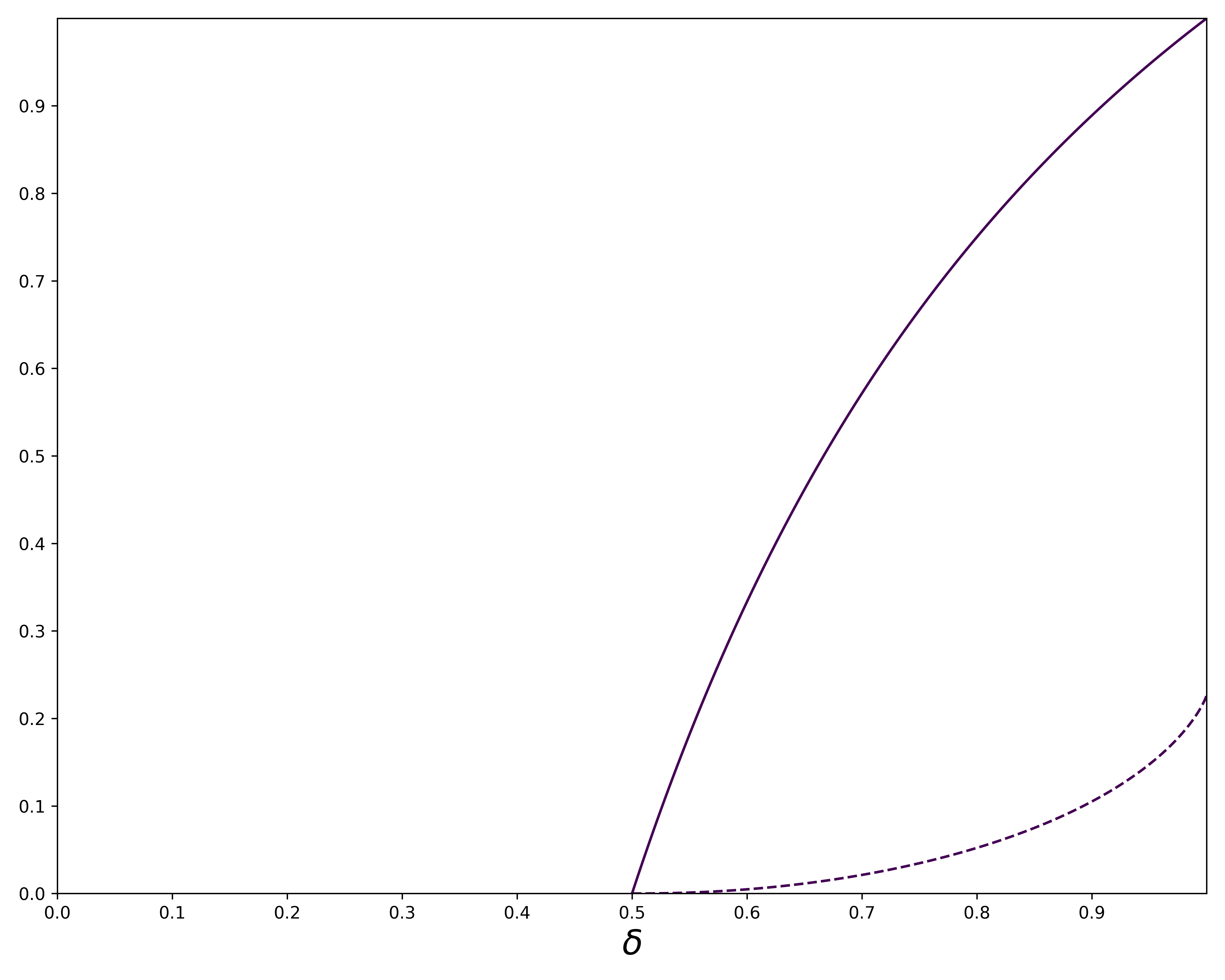}
    \caption{$\rho_N$ (dashed) and $\rho_D$. The lower curve shows that value of $\rho_N(\delta)$ for each $\delta \in (1/2,1)$ which is the lower bound on the neighborliness of a random polytope with $\sim (1/\delta) d$ vertices. The upper curves shows the value of $\rho_D(\delta)$ for each $\delta \in (1/2,1)$ which is the lower bound on the $\ell$-face density.}    
    \label{fig:delta}
\end{figure}
We define the function $\rho_D'(\alpha)= 2-\alpha$. By \cref{thm:neighborly}, the function $\rho_D'(\alpha)$ has the property that if $\rho< \rho_D'(\alpha)$, then the expected $\lfloor \rho d\rfloor$-face density of a random polytope in $\Rl^d$ with $\sim \alpha d$ vertices approaches one as $d \to \infty$. The function $\rho_D'(\alpha)$ is also plotted in \cref{fig:alpha}.

Recall that the functions $\rho_{VS}(\delta)$ and $\rho_{DT}(\delta)$ discussed in \cref{sec:Gauss} are defined as functions of $\delta$ where the dimension $d$ satisfies $d \sim \delta n$ where $n$ is the number of vertices. In contrast, we defined the functions $\rho_N'(\alpha)$ and $\rho_{D}'(\alpha)$ as functions of $\alpha$ where the number of vertices $n$ satisfies $n \sim \alpha d$. Therefore in order to compare the above the results to the results for the Gaussian case discussed in \cref{sec:Gauss}, we set $\delta = 1/\alpha$ and define functions $\rho_{N}(\delta):= \rho_N'(1/\delta)$ and $\rho_{D}(\delta):= \rho_D'(1/\delta)$. These functions are plotted in \cref{fig:delta} and can be compared with the functions $\rho_{VS}(\delta)$ and $\rho_{DT}(\delta)$ which are plotted in \cite[Figure 1]{MR2168716}.

In \cref{sec:bad} we show that the above two results are close to best possible by constructing a family of distributions on $\Rl^d$ which show that \cref{thm:neighborly} (resp. \cref{thm:density}) is not true if $\rho_{N}'(\alpha)$ (resp. $\rho_{D}'(\alpha)$) is replaced by a function which is strictly less than $\rho_{N}'(\alpha)$ (resp. $\rho_{D}'(\alpha)$) for any $\alpha$ in the range $(1,2)$. 

\subsection{Applications}
As discussed earlier, one of the main reasons to study random polytopes is to help understand the average behavior of algorithms where the input data can be thought of as a convex polytope. For the specific property of $k$-neighborliness, the main application is to \emph{compressed sensing}, see \cite{MR2168716} for an explanation of the connection of $k$-neighborliness to compressed sensing. This connection is what has motivated much of the work on the neighborliness of random polytopes some of which was cited in \cref{sec:Gauss}.

\subsection{Outline of the paper}
\cref{sec:lower} contains the proofs of \cref{thm:neighborly,thm:density}. Before proving the theorems, we explain the idea behind the proof in \cref{sec:WW} and also in that section we explain the main tool behind the proofs. The main tool is a  generalization of a result of Wagner and Welzl \cite{WW} which gives an upper bound on the probability that a given point is in the convex hull of a sample of random points.

\cref{sec:least} contains the construction of the distributions which show that \cref{thm:neighborly,thm:density} are essentially best possible. In other words, we construct distributions which produce the least neighborly polytopes over all distributions which assign measure zero to every hyperplane. In order to prove that the distributions we construct have the desired property, we require a result which gives a lower bound on the probability that a given point is in the convex hull of a sample of random points. More specifically, given a distribution on $\Rl^d$ and a point in $\Rl^d$ which is at some \emph{depth} (\cref{def:depth}) with respect to the distribution, we prove a lower bound on the probability that point is in the convex hull of a sample of random points where this probability depends on the depth of the point (\cref{prop:lower}).

\section{The lower bound on the neighborliness}\label{sec:lower}
\cref{sec:WW} explains the main idea behind the proof of \cref{thm:neighborly,thm:density} and outlines the main tool for the proof. The proof is completed in \cref{sec:proofs} after establishing some necessary lemmas. 

\subsection{An upper bound on the probability that a point is in the convex hull}\label{sec:WW}

Wendel's theorem (\cite{MR0146858},\cite[Theorem 8.2.1]{schneider}) is a classic result in geometric probability which says that the probability that a set of $n$ i.i.d. random points from a distribution on $\Rl^d$ which is symmetric about $o$ and which assigns measure zero to every linear hyperplane is equal to $\frac{\sum_{i=0}^{n-d-1}\binom{n-1}{i}}{2^{n-1}}$. More recently, Wendel's theorem has been generalized by Wagner and Welzl in the following way. An \emph{absolutely continuous} probability distribution on $\Rl^d$ is a probability distribution which has a density function with respect to the Lebesgue measure on $\Rl^d$. A measure $\mu$ is \emph{balanced about a point $p$} if every hyperplane through $p$ divides $\mu$ into two equal halves. Wagner and Welzl showed the following
\begin{theorem}[{\cite[Corollary 3.7]{WW}}]\label{thm:WW}
Let $\mu$ be an absolutely continuous probability measure on $\Rl^d$. Let $S$ be a set of $n$ independent and identically distributed points from $\mu$. Then the probability that $\conv S$ contains the origin is at most
\[
\frac{\sum_{i=0}^{n-d-1}\binom{n-1}{i}}{2^{n-1}}
= 1-\frac{\sum_{i=0}^{d-1}\binom{n-1}{i}}{2^{n-1}}.
\]
and this bound is attained if and only if $\mu $ is balanced about the origin. 
\end{theorem}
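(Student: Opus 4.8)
The plan is to prove $P(n,d,\mu):=\prob(o\in\conv S)\le g(n,d)$, where $g(n,d):=1-\frac{\sum_{i=0}^{d-1}\binom{n-1}{i}}{2^{n-1}}=\frac{\sum_{i=0}^{n-d-1}\binom{n-1}{i}}{2^{n-1}}$, by induction on $n$ (the dimension $d$ varying in the recursion), and to read the equality case off from the induction. Two preliminary observations set the stage: (i) $g$ satisfies the Schl\"afli-type recursion $g(n,d)=\tfrac12 g(n-1,d-1)+\tfrac12 g(n-1,d)$, and $g(n,d)$ is exactly the value produced by Wendel's theorem, so the content of the statement is that the balanced case is extremal (and the balanced case is Wendel's theorem itself); (ii) whether $o\in\conv S$ depends only on the rays $\Rl_{\ge 0}X_i$, so $\mu$ may be pushed forward to the unit sphere, and for an absolutely continuous measure one checks that being ``balanced about $o$'' is the same as being centrally symmetric about $o$. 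The base case $n\le d$ is trivial: then $\conv S$ lies almost surely in a hyperplane, so $P(n,d,\mu)=0=g(n,d)$.

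For the inductive step I single out $X_n$ and condition on the line $\ell$ through $o$ and $X_n$. The geometric input is that $o\in\conv\{X_1,\dots,X_n\}$ exactly when the ray $\{-tX_n:t\ge 0\}$ meets $\conv\{X_1,\dots,X_{n-1}\}$. Given $\ell$, the point $X_n$ lies on one of the two rays of $\ell$ with probabilities $q(\ell)$ and $1-q(\ell)$, while $\conv\{X_1,\dots,X_{n-1}\}$ is independent of $\ell$; if $p_{+}(\ell),p_{-}(\ell)$ denote the (already $X_1,\dots,X_{n-1}$-averaged) probabilities that a prescribed ray of $\ell$ meets $\conv\{X_1,\dots,X_{n-1}\}$, then
\[
\prob\bigl(o\in\conv S\mid\ell\bigr)=q\,p_{-}+(1-q)\,p_{+}=\tfrac12\bigl(p_{+}+p_{-}\bigr)-\bigl(q-\tfrac12\bigr)\bigl(p_{+}-p_{-}\bigr).
\]
Moreover $p_{+}(\ell)+p_{-}(\ell)=\prob\bigl(\ell\cap\conv\{X_1,\dots,X_{n-1}\}\neq\emptyset\bigr)+\prob\bigl(o\in\conv\{X_1,\dots,X_{n-1}\}\bigr)$; the first term equals $P\bigl(n-1,d-1,(\pi_\ell)_{*}\mu\bigr)$ (where $\pi_\ell$ is the orthogonal projection onto $\ell^{\perp}\cong\Rl^{d-1}$, and $(\pi_\ell)_{*}\mu$ is again absolutely continuous) and the second equals $P(n-1,d,\mu)$. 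Averaging over $\ell$ therefore gives
\[
P(n,d,\mu)=\tfrac12\,\e_{\ell}\bigl[P\bigl(n-1,d-1,(\pi_\ell)_{*}\mu\bigr)\bigr]+\tfrac12\,P(n-1,d,\mu)-\e_{\ell}\Bigl[\bigl(q(\ell)-\tfrac12\bigr)\bigl(p_{+}(\ell)-p_{-}(\ell)\bigr)\Bigr],
\]
and by the inductive hypothesis the first two terms are at most $\tfrac12 g(n-1,d-1)+\tfrac12 g(n-1,d)=g(n,d)$.

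The whole argument thus reduces to showing that the ``bias term'' $\e_{\ell}\bigl[(q(\ell)-\tfrac12)(p_{+}(\ell)-p_{-}(\ell))\bigr]$ is nonnegative, and this is the main obstacle. For balanced $\mu$ one has $q\equiv\tfrac12$, the bias term is $0$, and the recursion closes with equality (reproving Wendel); chasing the equality case backwards then forces $q\equiv\tfrac12$, i.e.\ $\mu$ balanced, which yields the ``only if''. Unwinding the change of variables one finds that the bias term is a positive multiple of $\prob\bigl(X_n\in\pos\{X_1,\dots,X_{n-1}\}\bigr)-\prob\bigl(X_n\in-\pos\{X_1,\dots,X_{n-1}\}\bigr)$, so what must be proved is the (intuitively plausible: aligned point clouds surround the origin less) inequality that a fresh sample is at least as likely to land in the positive hull of $n-1$ i.i.d.\ copies of itself as in the reflection of that hull, with equality iff $\mu$ is balanced. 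This is easy for $d=1$ and is a symmetry when $\mu$ is balanced, but in higher dimensions it does not follow from conditioning on the lines spanned by the points (the analogous inequality is false for arbitrary conditional sign-biases), and I expect it to require either a strengthened inductive hypothesis that simultaneously controls the corresponding probabilities for point sets some of whose points are drawn from $-\mu$, or the combinatorial machinery (shellings, $h$-vectors) behind the Upper Bound Theorem — this is the substantive content of \cite{WW}. Granting it, and noting that all relevant general-position exceptions have probability zero since $\mu$ is absolutely continuous, the induction goes through and also delivers the equality characterization.
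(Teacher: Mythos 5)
Your reduction is sound as far as it goes: the Schl\"afli-type recursion $g(n,d)=\tfrac12 g(n-1,d-1)+\tfrac12 g(n-1,d)$ is correct, the base case is correct, and the conditioning on the line $\ell$ through $o$ and $X_n$ (using independence of $\ell$ from $X_1,\dotsc,X_{n-1}$, the identity $p_++p_-=\prob(\ell\cap\conv\{X_1,\dotsc,X_{n-1}\}\neq\emptyset)+\prob(o\in\conv\{X_1,\dotsc,X_{n-1}\})$, and the identification of the first term with $P(n-1,d-1,(\pi_\ell)_*\mu)$) is a legitimate bookkeeping step. But this is not a proof, and you say so yourself: the entire content of the theorem has been pushed into the unproved claim that the bias term $\e_\ell[(q-\tfrac12)(p_+-p_-)]$ is nonnegative, equivalently that $\prob(X_n\in\pos\{X_1,\dotsc,X_{n-1}\})\ge\prob(X_n\in-\pos\{X_1,\dotsc,X_{n-1}\})$ with equality iff $\mu$ is balanced. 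That inequality is not routine, your proposed fallbacks (a strengthened inductive hypothesis, or importing the Upper Bound Theorem machinery) are not carried out, and the equality characterization is doubly incomplete: even granting the inequality, vanishing of the bias term forces only $(q-\tfrac12)(p_+-p_-)=0$ a.e.\ together with equality in the inductive hypothesis for every projected measure $(\pi_\ell)_*\mu$, not directly $q\equiv\tfrac12$, so the ``only if'' direction needs a genuine argument of its own. As written, the proposal is a correct reduction of the theorem to a statement of essentially the same depth.

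For comparison: the present paper does not reprove this statement at all --- it quotes it from Wagner--Welzl, where the proof goes through their continuous analogue of the Upper Bound Theorem ($h$-vector/shelling machinery), which is exactly the ingredient you identified as missing. What the paper does prove (\cref{thm:nonatomic}) is the inequality alone, under the weaker hypothesis that $\mu$ vanishes on hyperplanes, by an entirely different and softer route: embed the $n$ points in a larger i.i.d.\ sample of $N$ points, pass to the Gale transform, bound the number of relevant cofaces by the cyclic-polytope face numbers via the Upper Bound Theorem, and let $N\to\infty$ to recover the Wendel bound. That argument completely sidesteps your bias-term difficulty, but it cannot see the equality case, which is why the ``attained iff balanced'' clause is cited rather than proved here. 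If you want to salvage your write-up, the honest options are to either prove the positive-hull inequality (this is the hard theorem) or to switch to the Gale-transform argument for the inequality and cite \cite{WW} for the equality characterization.
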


Why is this sort of result useful for our purposes? In order to prove \cref{thm:neighborly}, we need to show that certain random polytopes are $k$-neighborly with probability close to one. Therefore, we need an upper bound on the probability that the polytope is not $k$-neighborly. By the union bound, this probability is upper bounded by $\binom{n}{k}$ times the probability that a subset $K$ of the vertices of size $k$ is not a face of the polytope. A standard fact about polytopes is that $\conv K$ is not a face of the polytope if and only if the affine hull of $K$ intersects the convex hull of the remaining vertices of the polytope (\cref{prop:face}). Therefore, letting $V$ denote the set of vertices of the polytope, if we project $V$ to the subspace which is orthogonal to the affine hull of $K$, then the event that $\conv K$ is not a face of the polytope is equivalent to the event that the convex hull of $V \setminus K$ contains the projection of the affine hull of $K$ which is a point in the image space of the projection. \cref{thm:WW} can then be used to given an upper bound for the probability of this event. 

Rather than using \cref{thm:WW}, we will prove another version of this result which applies not just to absolutely continuous distribution but to any distribution which assigns measure zero to every hyperplane. We also give a proof of this result which is very different from the proof of \cref{thm:WW} given in \cite{WW}. However the following result is not entirely new; it was mentioned in \cite{WW} that such a proof works but no details were given.

\begin{theorem}\label{thm:nonatomic}
Let $\mu$ be a probability measure on $\Rl^d$ that assigns measure zero to every hyperplane. Let $S$ be a set of $n$ independent and identically distributed points from $\mu$. Then the probability that $\conv S$ contains the origin is at most
\[
\frac{\sum_{i=0}^{n-d-1}\binom{n-1}{i}}{2^{n-1}}.
\]
\end{theorem}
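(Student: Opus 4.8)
The plan is to reduce \cref{thm:nonatomic} to the absolutely continuous case, \cref{thm:WW}, by a mollification argument: convolve $\mu$ with a shrinking kernel to make it absolutely continuous, apply \cref{thm:WW}, and then let the kernel vanish. The only genuinely new ingredient is the observation that, under the hypothesis on $\mu$, the origin almost surely misses the boundary of the random polytope, so that the event ``$o\in\conv S$'' is stable under infinitesimal perturbation.

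In detail, let $X_1,\dots,X_n$ be i.i.d.\ from $\mu$ and, independently, let $W_1,\dots,W_n$ be i.i.d.\ uniform on the closed unit ball $B\subseteq\Rl^d$. For $\epsilon>0$ put $X_i^{\epsilon}:=X_i+\epsilon W_i$, so that $X_1^{\epsilon},\dots,X_n^{\epsilon}$ are i.i.d.\ from the convolution of $\mu$ with the uniform distribution on $\epsilon B$, which is absolutely continuous with respect to Lebesgue measure. Thus \cref{thm:WW} gives
\[
\prob\bigl(o\in\conv\{X_1^{\epsilon},\dots,X_n^{\epsilon}\}\bigr)\;\le\;\frac{\sum_{i=0}^{n-d-1}\binom{n-1}{i}}{2^{n-1}}\qquad\text{for every }\epsilon>0,
\]
and it remains to show that the left-hand side converges to $\prob\bigl(o\in\conv\{X_1,\dots,X_n\}\bigr)$ as $\epsilon\downarrow0$.

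For the convergence: since each $W_i\in B$ we have $\conv\{X_1^{\epsilon},\dots,X_n^{\epsilon}\}\subseteq\conv\{X_1,\dots,X_n\}+\epsilon B$, so on the event $\{o\notin\conv\{X_1,\dots,X_n\}\}$, letting $\delta>0$ be the distance from $o$ to the compact set $\conv\{X_i\}$, we get $o\notin\conv\{X_i^{\epsilon}\}$ once $\epsilon<\delta$. On the event $\{o\in\interior\conv\{X_1,\dots,X_n\}\}$, pick $\rho>0$ with $\rho B\subseteq\conv\{X_i\}$; if $o\notin\conv\{X_i^{\epsilon}\}$ there is a unit vector $u$ with $\langle u,X_i^{\epsilon}\rangle>0$ for all $i$, hence $\langle u,X_i\rangle>-\epsilon$ for all $i$, hence $\langle u,y\rangle>-\epsilon$ for every $y\in\conv\{X_i\}$, and taking $y=-\rho u$ forces $\epsilon>\rho$; so $o\in\conv\{X_i^{\epsilon}\}$ once $\epsilon<\rho$. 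Consequently, off the event $\{o\in\bd\conv\{X_1,\dots,X_n\}\}$ the indicator $\mathbf 1\{o\in\conv\{X_i^{\epsilon}\}\}$ converges to $\mathbf 1\{o\in\conv\{X_i\}\}$ as $\epsilon\downarrow0$, and bounded convergence delivers the required limit as soon as $\prob\bigl(o\in\bd\conv\{X_1,\dots,X_n\}\bigr)=0$.

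That last fact is the crux, and it is precisely where the hypothesis that $\mu$ assigns measure zero to every hyperplane is used. I would argue that a.s.\ no $d+1$ of the points lie on a common hyperplane — conditioning on any $d$ of them, which are a.s.\ affinely independent and hence span a hyperplane, the remaining point lands on that hyperplane with probability $0$ — so a.s.\ $\conv\{X_i\}$ is a simplicial polytope each of whose proper faces is a simplex on at most $d$ of the $X_i$. If $o\in\bd\conv\{X_i\}$ then $o$ lies in the relative interior of such a proper face $\conv\{X_i:i\in J\}$ with $\abs{J}\le d$, so $o\in\aff\{X_i:i\in J\}$; but for each fixed $J$ with $1\le\abs{J}\le d$, conditioning on all but one of $\{X_i:i\in J\}$ exhibits $\{o\in\aff\{X_i:i\in J\}\}$ as the event that the remaining point falls into a fixed affine subspace of dimension at most $d-1$ — in particular into a fixed hyperplane — which has probability $0$; a union bound over the finitely many $J$ finishes the lemma. (When $n\le d$ the same conditioning shows directly that $o\in\conv\{X_i\}$ forces $o\in\aff\{X_1,\dots,X_n\}$, an event of probability $0$, matching the empty sum on the right-hand side, so we may assume $n\ge d+1$.) I expect this general-position bookkeeping to be the only delicate part of the argument: no hard estimate appears, and everything else is \cref{thm:WW} together with the elementary stability of convex hulls under small perturbations.
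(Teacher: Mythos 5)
Your proof is correct, but it follows a genuinely different route from the paper. The paper does not deduce \cref{thm:nonatomic} from \cref{thm:WW} at all: it gives a self-contained argument via the Gale transform, bounding the expected number of $n$-subsets of a larger sample of $N$ points whose convex hull contains the origin by the number of $(N-n-1)$-faces of a simplicial polytope with $N$ vertices in $\Rl^{N-d-1}$, invoking the Upper Bound Theorem and the cyclic-polytope face counts, and then letting $N\to\infty$ to recover exactly $\frac{\sum_{i=0}^{n-d-1}\binom{n-1}{i}}{2^{n-1}}$ (this is the proof sketched without details in Wagner--Welzl). You instead reduce to the absolutely continuous case by convolving $\mu$ with the uniform measure on $\epsilon B$, applying \cref{thm:WW}, and passing to the limit $\epsilon\downarrow 0$ using the stability of the events $\{o\notin\conv S\}$ and $\{o\in\interior\conv S\}$ under $\epsilon$-perturbations together with the fact that $o$ almost surely avoids $\bd\conv S$. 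Your approach buys brevity and transparency, but at the cost of importing the full strength of Wagner--Welzl's result for absolutely continuous measures, whereas the paper's argument is independent of that machinery and rests only on classical polytope theory; your argument also discards the equality characterization of \cref{thm:WW}, which is harmless here since the statement only asserts an upper bound. One small point of bookkeeping: in your general-position lemma, the claim that conditioning on all but one point of $J$ turns $\{o\in\aff\{X_i\colon i\in J\}\}$ into the event that the last point lies in a fixed affine subspace is not literally true when $o$ already lies in the affine hull of the conditioned points; this degenerate case is itself an event of the same type for a proper subset of $J$, so a short induction on $\abs{J}$ (base case $\abs{J}=1$, where the event is $X_i=o$) closes it --- a routine fix, not a gap in the overall argument.
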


\begin{proof}
The quantity of interest is
\[
\mathbb{P}(o\in \conv(X_1,\dotsc,X_n)).
\]
Note that since $\mu$ assigns measure zero to ever hyperplane (and, in particular, every hyperplane through the origin), the probability that some $\ell$-flat through the origin contains more than $\ell$ points is zero. This means that the probability that $\conv(X_1,\dotsc,X_n)$ contains the origin on its boundary is zero and so the above probability is equal to 
\[
p:=\mathbb{P}(o\in \interior\conv(X_1,\dotsc,X_n)).
\]
Let $X_1,\dotsc, X_N$ be $N$ i.i.d. points distributed according to $\mu$ where $N > n$ is some integer. Now we consider the \emph{Gale transform} \cite[Section 5.4]{MR1976856} of the set of points $\{X_1,\dotsc, X_N\}$. The Gale transform of $\{X_1,\dotsc,X_N\}$ is a set $\bar{X}_N$ of $N$ points in $\Rl^{N-d-1}$ and by \cite[5.4.1]{MR1976856} there is a one-to-one correspondence between subsets of $\{X_1,\dots,X_N\}$ of size $n$ which contain the origin in their interior and subsets of $\bar{X}_N$ of size $N-n$ which are faces of the polytope $\conv \bar{X}_N$. We note that since no hyperplane through the origin contains more than $d-1$ points of $\{X_1,\dotsc,X_N\}$ with probability one, the polytope $\conv \bar{X}_N$ is simplicial with probability one \cite[Section 5.4]{MR1976856}. 

By the Upper Bound Theorem for convex polytopes \cite{UBT}, and the formula for the number of faces of each dimension of a cyclic polytope, see for example \cite[Theorem 17.3.4]{hand}, the number of subsets of $\bar{X}_N$ of size $N-n$ which are faces of the polytope $\conv \bar{X}_N$ is at most
\begin{align}\label{eq:cyclic}
C(N,n,d):= \frac{N-\delta(n-1)}{n}\sum_{j=0}^{\lfloor \frac{N-d-1}{2}\rfloor} \binom{N-1-j}{n-1}\binom{n}{N-2j+\delta}
\end{align}
where $\delta = N-d-1-2\lfloor(N-d-1)/2 \rfloor$. Therefore, we  know that the expected number of subsets of $\{X_1,\dotsc,X_N\}$ of size $n$ which contain the origin in the interior of their convex hull is at most $C(N,n,d)$, i.e., 
\begin{align*}
p\cdot\binom{N}{n} \le C(N,n,d).
\end{align*}
Therefore, in order to prove the desired bound on $p$, it will suffice to show that 
\begin{align}\label{eq:limit}
\lim_{N\to \infty} \frac{C(N,n,d)}{\binom{N}{n}} = \frac{\sum_{i=0}^{n-d-1}\binom{n-1}{i}}{2^{n-1}}.
\end{align}
To make the calculation simpler, we will restrict our attention to values of $N$ such that $N-d-1$ is even, i.e. the parity of $N$ is the opposite of the parity of $d$. This means that $\delta=0$.
For the terms in \cref{eq:cyclic} to be non-zero, we need $N-2j+\delta \le n$ and so $2j \ge N+\delta-n$. Therefore, \cref{eq:cyclic} equals
\[
\frac{N}{n}\sum_{j=\lceil\frac{N-n}{2} \rceil}^{\frac{N-d-1}{2}} \binom{N-1-j}{n-1}\binom{n}{N-2j}
=\frac{N}{n}\sum_{j=\lceil\frac{N-n}{2} \rceil}^{ \frac{N-d-1}{2}} \binom{N-1-j}{n-1}\bigg( \binom{n-1}{N-2j-1} + \binom{n-1}{N-2j} \bigg)
\]
Letting $m = (N-d-1)/2-j$ and using that $(N-d-1)/2-\lceil\frac{N-n}{2} \rceil = \lfloor \frac{n-d-1}{2}\rfloor $, the above is equal to
\begin{align*}
&\frac{N}{n}\sum_{m=0}^{\lfloor \frac{n-d-1}{2}\rfloor } \binom{N-1-  (N-d-1)/2-m }{n-1}\bigg( \binom{n-1}{d+1+2m-1} + \binom{n-1}{d+1+2m} \bigg)\\
&=\frac{N}{n}\sum_{m=0}^{\lfloor \frac{n-d-1}{2}\rfloor } \binom{N/2+(d+1)/2-1-m}{n-1}\bigg( \binom{n-1}{d+1+2m-1} + \binom{n-1}{d+1+2m} \bigg)\\
&\stackrel{N\to \infty}{\sim}\frac{N^{n}}{n!} \frac{1}{2^{n-1}}\sum_{m=0}^{\lfloor \frac{n-d-1}{2}\rfloor }\bigg( \binom{n-1}{d+1+2m-1} + \binom{n-1}{d+1+2m} \bigg)\\
&= \frac{N^{n}}{n!}\frac{1}{2^{n-1}}\sum_{i=0}^{n-d-1}\binom{n-1}{i}.
\end{align*}
Since $\binom{N}{n}\stackrel{N\to \infty}{\sim}\frac{N^n}{n!}$, \cref{eq:limit} follows. 
\end{proof}

\subsection{Proofs of \cref{thm:neighborly,thm:density}}\label{sec:proofs}
In this section we prove \cref{thm:neighborly,thm:density}.  Before proving the theorems we need to make rigorous the idea explained in \cref{sec:WW}. That is we first show how \cref{thm:nonatomic} can be used to upper bound the probability that a subset of vertices of a random polytope is not a face of the polytope. This bound is given in \cref{thm:face}. First we need a simple corollary of \cref{thm:nonatomic}:
\begin{corollary}\label{cor:flat}
Let $\mu$ be a probability measure on $\Rl^d$ that assigns measure zero to every hyperplane. Let $S:= \{X_1,\dotsc,X_n\}$ be a set of $n$ independent and identically distributed points from $\mu$. Let $L\subset \Rl^d$ be some affine $\ell$-flat. Then the probability that $ L \cap \conv S \neq \emptyset$ is at most
\[
\frac{\sum_{i=0}^{n-(d-\ell)-1}\binom{n-1}{i}}{2^{n-1}}.
\]
\end{corollary}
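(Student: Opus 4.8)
The plan is to reduce the statement to \cref{thm:nonatomic} by means of an orthogonal projection. Let $W\subset \Rl^d$ be the $\ell$-dimensional linear subspace parallel to the affine flat $L$, and let $\pi\colon \Rl^d\to W^{\perp}$ be the orthogonal projection onto the $(d-\ell)$-dimensional orthogonal complement $W^{\perp}$. Since $L$ is a coset of $W$, its image $\pi(L)$ is a single point, which I will call $q\in W^{\perp}$. Because $\pi$ is linear and the convex hull commutes with linear maps, $\pi(\conv S)=\conv(\pi(S))$, and therefore $L\cap \conv S\neq\emptyset$ if and only if $q\in\conv(\pi(X_1),\dotsc,\pi(X_n))$. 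After identifying $W^{\perp}$ with $\Rl^{d-\ell}$ and translating by $-q$ so that $q$ moves to the origin, the probability in question becomes exactly the probability that the origin lies in the convex hull of the translated projected points.

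Next I would verify the hypothesis needed to apply \cref{thm:nonatomic} in dimension $d-\ell$. Writing $\tau$ for translation by $-q$, the translated projected points $\tau(\pi(X_i))$ are i.i.d.\ according to the pushforward probability measure $\nu:=(\tau\circ\pi)_{*}\mu$ on $\Rl^{d-\ell}$, since the $X_i$ are i.i.d.\ from $\mu$ and $\tau\circ\pi$ is a fixed measurable map. The key point is that $\nu$ assigns measure zero to every hyperplane of $\Rl^{d-\ell}$: if $H'$ is such a hyperplane, then $(\tau\circ\pi)^{-1}(H')$ is an affine subspace of $\Rl^d$ of dimension $(d-\ell-1)+\ell=d-1$, i.e.\ an affine hyperplane of $\Rl^d$, so $\nu(H')=\mu\big((\tau\circ\pi)^{-1}(H')\big)=0$ by hypothesis on $\mu$. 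Hence $\nu$ satisfies the assumption of \cref{thm:nonatomic}.

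Applying \cref{thm:nonatomic} with $\Rl^{d-\ell}$ in place of $\Rl^d$ and $\nu$ in place of $\mu$ then gives that the probability that the origin lies in $\conv(\tau(\pi(X_1)),\dotsc,\tau(\pi(X_n)))$ is at most $\frac{\sum_{i=0}^{n-(d-\ell)-1}\binom{n-1}{i}}{2^{n-1}}$, which is precisely the asserted bound. The degenerate range where $d-\ell$ is small is harmless, since there the bound equals $1$ and matches the trivial fact that $L\cap\conv S$ may always be nonempty.

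I do not expect a genuine obstacle here. The only two points requiring care are the observation that $\pi$ collapses the whole flat $L$ to a single point (so that we really are asking whether a \emph{point} lies in a convex hull, as required by \cref{thm:nonatomic}), and the verification that pushing $\mu$ forward under $\pi$ preserves the ``measure zero on every hyperplane'' property. The latter is exactly where the codimension $d-\ell$, rather than the ambient dimension $d$, enters the final formula.
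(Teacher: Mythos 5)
Your proposal is correct and follows essentially the same route as the paper: both arguments project $\Rl^d$ orthogonally along the flat $L$ onto a $(d-\ell)$-dimensional space, note that the pushforward of $\mu$ still assigns measure zero to every hyperplane (because preimages of hyperplanes under the projection are hyperplanes in $\Rl^d$, which the paper phrases via an affine change of coordinates), and then apply \cref{thm:nonatomic} to the point to which $L$ collapses. The only cosmetic difference is that the paper first normalizes $L$ to a coordinate subspace by an affine transformation, whereas you project onto $W^{\perp}$ directly and translate; the content is identical.
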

\begin{proof}
Note that any affine coordinate transformation of $\Rl^d$ preserves the fact that $\mu$ assigns measure zero to every hyperplane. Therefore, after applying an affine coordinate transformation to $\mu$, we may assume that $L$ is the span of the first $\ell$ standard basis vectors in $\Rl^d$. We can identify $\Rl^{d-\ell}$ with $\{0,\dotsc,0\}\times \Rl^{d-\ell}\subset\Rl^d$. Let $\pi_{d-\ell}$ denote the orthogonal projection onto $\Rl^{d-\ell}$. 

The probability that $ L \cap \conv S \neq \emptyset$ is equal to the probability that $\conv(\tilde{X_1}, \dotsc,\tilde{X_n})$ contains the origin where the $\tilde{X_i}$ are independent and distributed according to the projection of $\mu$ by $\pi_{d-\ell}$. The corollary now follows from \cref{thm:nonatomic}.
\end{proof}
The above corollary is useful for our purposes due to the following well-known fact: 
\begin{proposition}[{\cite[Exercise 3.1.1]{MR1976856}}]\label{prop:face}
Let $P$ be a polytope and $A\subset V(P)$. Then $\conv A$ is a face of $P$ if and only if $\aff A \cap \conv(V(P) \setminus A) = \emptyset$. 
\end{proposition}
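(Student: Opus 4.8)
The plan is to prove both implications directly from the characterization of a face by a supporting hyperplane: a nonempty proper subset $G$ of a polytope $P$ is a face precisely when there exist a linear functional $c$ and a scalar $c_0$ with $P \subseteq \{x : \langle c, x\rangle \le c_0\}$ and $G = P \cap \{x : \langle c, x\rangle = c_0\}$. The trivial cases $A = \emptyset$ (so $\conv A = \emptyset$, always a face, and $\aff A = \emptyset$) and $V(P)\setminus A = \emptyset$ (so $\conv A = P$) are immediate, so I would assume both $A$ and $V(P)\setminus A$ are nonempty.

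For the forward direction, suppose $\conv A$ is a face of $P$, exposed by $(c,c_0)$ as above. Each $v \in V(P)\setminus A$ is an extreme point of $P$; since the extreme points of $\conv A$ all lie in $A$, such a $v$ cannot belong to $\conv A = P \cap \{x : \langle c, x\rangle = c_0\}$, so $\langle c, v\rangle < c_0$. As $V(P)\setminus A$ is finite this gives $\conv(V(P)\setminus A) \subseteq \{x : \langle c,x\rangle < c_0\}$, whereas $\aff A = \aff(\conv A) \subseteq \{x : \langle c, x\rangle = c_0\}$ because this hyperplane is affine and contains $\conv A$. These two sets are disjoint, which is the claim.

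For the converse I would construct the supporting hyperplane by hand. Write $B := V(P) \setminus A$, $L := \aff A$, and $Q := \conv B$, and assume $L \cap Q = \emptyset$. Here $L$ is closed convex and $Q$ is compact convex, so by the standard separation theorem for a closed convex set and a disjoint compact convex set there is a functional $c$ with $\sup_L \langle c, \cdot\rangle < \inf_Q \langle c, \cdot\rangle$. The key observation is that an affine function that is bounded above on an affine subspace is constant on it; hence $\langle c, \cdot\rangle \equiv c_0$ on $L$ for some scalar $c_0$, and $\langle c, q\rangle > c_0$ for all $q \in Q$. After replacing $(c,c_0)$ by $(-c,-c_0)$, the hyperplane $H := \{x : \langle c, x\rangle = c_0\}$ contains $A \subseteq L$ and has $B \subseteq \{x : \langle c, x\rangle < c_0\}$, so $P = \conv(A \cup B) \subseteq \{x : \langle c, x\rangle \le c_0\}$ and $\conv A \subseteq P \cap H$. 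For the reverse inclusion, write a point $x \in P \cap H$ as a convex combination $\sum_{a \in A}\lambda_a a + \sum_{b \in B}\mu_b b$; equating $\langle c, x\rangle$ with $c_0$ yields $\sum_{b\in B} \mu_b (c_0 - \langle c, b\rangle) = 0$, and since every summand is nonnegative and is positive whenever $\mu_b > 0$, all the $\mu_b$ vanish and $x \in \conv A$. Thus $\conv A = P \cap H$ is a face.

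The only genuine obstacle I anticipate is in the converse: obtaining a supporting hyperplane that \emph{contains} all of $\aff A$, rather than one that merely separates $A$ from the remaining vertices. The observation that a separating functional which is one-sidedly bounded over an affine subspace is automatically constant on it is what overcomes this, and it also renders the final check that $H$ exposes exactly $\conv A$ routine.
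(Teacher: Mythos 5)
Your argument is correct. Note that the paper does not prove \cref{prop:face} at all --- it is quoted as a known fact with a citation to Ziegler's book (Exercise 3.1.1) --- so there is no in-paper proof to compare with; your separation argument is exactly the standard solution that the citation points to. Both directions check out: the forward direction correctly uses that a vertex of $P$ lying in $\conv A$ would be an extreme point of $\conv A$ and hence an element of $A$, and the converse correctly upgrades strict separation of $\aff A$ from $\conv(V(P)\setminus A)$ to a supporting hyperplane containing $\aff A$ via the observation that a linear functional bounded above on an affine subspace is constant there, after which the coefficient argument showing $P\cap H=\conv A$ is routine.
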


\begin{theorem}\label{thm:face}
Let $\mu$ be a probability measure on $\Rl^d$ that assigns measure zero to every hyperplane. Let $S_n$ be a set of $n$ independent and identically distributed points from $\mu$. Then for any subset $K$ of $S_n$ of size $k$, the probability that $\conv K$ is not a face of $\conv S_n$ is at most

\[
\frac{\sum_{i=0}^{n-d-2}\binom{n-k-1}{i}}{2^{n-k-1}}. 
\]
\end{theorem}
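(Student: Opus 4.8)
The plan is to reduce the statement to \cref{cor:flat} by conditioning on the points of $K$. By exchangeability of an i.i.d.\ sample we may take $K=\{X_1,\dots,X_k\}$. The first step is the deterministic observation that if the affine subspace $\aff K$ does not meet the compact convex set $\conv(S_n\setminus K)$, then $\conv K$ is a face of $\conv S_n$: a hyperplane strictly separating $\aff K$ from $\conv(S_n\setminus K)$ must contain $\aff K$ (a linear functional bounded on an affine subspace is constant on it), and this hyperplane then supports $\conv S_n$ and meets it in precisely $\conv K$ (cf.\ \cref{prop:face}). Hence
\[
\{\conv K\text{ is not a face of }\conv S_n\}\subseteq\{\aff K\cap\conv(S_n\setminus K)\ne\emptyset\},
\]
so it suffices to bound the probability of the event on the right-hand side.

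For that, I would condition on $X_1,\dots,X_k$. We may assume $1\le k\le d$ and $k<n$, since when $k\ge d+1$ the asserted bound equals $1$ (its numerator is then $2^{n-k-1}$) and $k\in\{0,n\}$ are trivial. Since $\mu$ assigns measure zero to every hyperplane and $k\le d$, the points of $K$ are affinely independent with probability one, so $L:=\aff K$ is almost surely an affine $(k-1)$-flat; conditionally on $X_1,\dots,X_k$ it is a fixed flat, while $X_{k+1},\dots,X_n$ are still $n-k$ i.i.d.\ points from $\mu$ independent of $L$. Applying \cref{cor:flat} with $\ell=k-1$ to those $n-k$ points bounds the conditional probability that $L\cap\conv\{X_{k+1},\dots,X_n\}\ne\emptyset$ by
\[
\frac{\sum_{i=0}^{(n-k)-(d-(k-1))-1}\binom{(n-k)-1}{i}}{2^{(n-k)-1}}=\frac{\sum_{i=0}^{n-d-2}\binom{n-k-1}{i}}{2^{n-k-1}},
\]
the key simplification being $(n-k)-(d-(k-1))-1=n-d-2$. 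As this bound is independent of $X_1,\dots,X_k$, taking expectations removes the conditioning and proves the theorem.

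There is no serious obstacle: the probabilistic content is entirely carried by \cref{thm:nonatomic}/\cref{cor:flat}, and what remains is the elementary separation argument together with a routine condition-and-apply. The one point that deserves attention is that \cref{prop:face} as usually stated presupposes that $K$ lies in the vertex set of $\conv S_n$, which is \emph{not} an almost-sure event for a general distribution $\mu$ (a sampled point may fall inside the convex hull of the others); this is why I would run the reduction through the direct separation statement above, which requires no general-position hypothesis, and invoke the almost-sure affine independence of $K$ only to identify the dimension of $\aff K$ when calling \cref{cor:flat}.
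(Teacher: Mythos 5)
Your proposal is correct and takes essentially the same route as the paper's proof: fix (condition on) the $k$ points of $K$, reduce via the face criterion to the event that $\aff K$ meets $\conv(S_n\setminus K)$, and apply \cref{cor:flat} with $\ell=k-1$, the remaining $n-k$ points being i.i.d.\ from $\mu$. Your direct separation argument in place of citing \cref{prop:face} is a nice touch, since it sidesteps the fact that $K\subseteq V(\conv S_n)$ need not hold almost surely, a point the paper's proof passes over; only that one inclusion is needed for the upper bound.
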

\begin{proof}
Let $S_n:=\{X_1,\dotsc, X_n\}$. Define
\[ f(x_1,\dotsc, x_n):= \begin{cases} 
      1 & \text{ if } \conv(x_1,\dotsc,x_n) \text{ is not a face of } \conv S_n \\
     0 &  \text{ otherwise}
      \end{cases}
\]
Since all the $X_i$ are independent and identically distributed, the probability that $\conv(X_{i_1},\dotsc,X_{i_k})$ is not a face of $\conv S_n$ is independent of the choice of subscripts. The probability that $\conv(X_{n-k+1},\dotsc, X_n)$ is not a face of $\conv S_n$ is equal to
\begin{equation}\label{eq:int}
\int_{\Rl^d} \dotsc\int_{\Rl^d} f(x_{n-k+1},\dotsc,x_n) \mu(\ud x_1)\dotsc \mu(\ud x_n).
\end{equation}
For any fixed choice of points $x_{n-k+1}, \dotsc, x_n$, the inner integral
\[
\int_{\Rl^d} \dotsc\int_{\Rl^d} f(x_{n-k+1},\dotsc,x_n) \mu(\ud x_1)\dotsc \mu(\ud x_{n-k})
\]
is equal (by \cref{prop:face}) to the probability that $\aff(x_{n-k+1},\dotsc, x_n) \cap \conv(S_n \setminus \{x_{n-k+1},\dotsc,x_n\}) \neq \emptyset$. If $\{x_{n-k+1},\dotsc, x_n\}$ are in general position, that is, they are not contained in any affine $(k-2)$-flat, then by \cref{cor:flat}, this probability is at most
\begin{equation}\label{eq:bound}
\frac{\sum_{i=0}^{n-d-2}\binom{n-k-1}{i}}{2^{n-k-1}}. 
\end{equation}
Since $\mu$ assigns measure zero to every hyperplane, the measure of the set of $\{x_{n-k+1}, \dotsc,x_n\}$ which are contained in some affine $(k-2)$-flat is zero. Therefore, \cref{eq:int} is the integral of a function that is bounded by \cref{eq:bound} except possibly on a set of measure zero, so statement of the theorem follows. 
\end{proof}

We can now prove the main results. 

\begin{proof}[Proof of \cref{thm:neighborly}]
    Let $S_n:=\{X_1,\dotsc, X_n\}$. By \cref{thm:face} and the union bound, the probability that $\conv S_n$ is not $k$-neighborly is at most
\begin{align*}
    \binom{n}{k}\frac{\sum_{i=0}^{n-d-2}\binom{n-k-1}{i}}{2^{n-k-1}}.
\end{align*}
So we just need to prove that this quantity goes to zero as $d\to \infty$. The assumptions on $n,k,d$ imply that for $d$ sufficiently large, $n-d-2<\frac{n-k-1}{2}$. Therefore, by the unimodality of the binomial coefficients and using that $\binom{n}{k} \le 2^{nH(k/n)}$, for $d$ sufficiently large, 
\begin{align*}
    \binom{n}{k}\frac{\sum_{i=0}^{n-d-2}\binom{n-k-1}{i}}{2^{n-k-1}}
    & \le  \frac{\binom{n}{k}\cdot n\cdot \binom{n-k-1}{n-d-2}}{2^{n-k-1}}\\
    & \le n\frac{2^{nH(k/n)}2^{(n-k-1)H((n-d-2)/(n-k-1))}}{2^{n-k-1}}\\
    &= n2^{nH(k/n)}2^{(n-k-1)\big(H((n-d-2)/(n-k-1))-1\big)}.
\end{align*}
Since $n \sim \alpha d$ and  $k \sim \beta d$ we have that   $n-d-2 \sim (\alpha-1)d$ and $n-k-1\sim (\alpha -\beta)d $. So for $d$ sufficiently large, the above is at most
\[
2\alpha d 2^{\alpha dH(\beta/\alpha)+(\alpha-\beta)d\big( H(\frac{\alpha-1}{\alpha-\beta})-1\big)}.
\]
This quantity goes to zero as $d\to \infty $ as long as $\alpha H(\beta/\alpha)+(\alpha-\beta)(H(\frac{\alpha-1}{\alpha-\beta})-1)<0$ which is the assumption on $\beta$. 
\end{proof}

\begin{proof}[Proof of \cref{thm:density}]
Let $S_n:=\{X_1,\dotsc, X_n\}$. The expected $\ell$-face density of $\conv S_n$ is equal to the probability that $\conv(X_1,\dotsc,X_{\ell+1})$ is a face of $\conv S_n$. So we just need to show that the probability that $\conv(X_1,\dotsc,X_{\ell+1})$ is not a face of $\conv S_n$ is $o(1)$ as $d \to \infty$. By \cref{thm:face}, the probability that $\conv(X_1,\dotsc,X_{\ell+1})$ is not a face of $\conv S_n$ is at most
\[
    \frac{\sum_{i=0}^{n-d-2}\binom{n-\ell-2}{i}}{2^{n-\ell-2}}
\]
Since $n\sim\alpha d$ and $\ell \sim \beta d$, we know that 
\[
\frac{n-d-2}{n-\ell-2} \sim \frac{\alpha-1}{\alpha-\beta}.
\]
By the assumption that $\beta<2-\alpha$, we have that $ \frac{\alpha-1}{\alpha-\beta} <1/2$. Therefore, there exists $\epsilon>0$ such that for $d$ sufficiently large,  $\frac{n-d-2}{n-\ell-2}<1/2-\epsilon$. By the unimodality of the binomial coefficients, the fact that $\frac{n-d-2}{n-\ell-2}<1/2-\epsilon$ implies that 
\[
    \frac{\sum_{i=0}^{n-d-2}\binom{n-\ell-2}{i}}{2^{n-\ell-2}} \le\frac{ n \binom{n-\ell-2}{n-d-2}}{2^{n-\ell-2}}
\]
Again using that  $\binom{n}{k} \le 2^{nH(k/n)}$, we have that for $d$ sufficiently large,
\begin{align*}
\frac{\sum_{i=0}^{n-d-2}\binom{n-\ell-2}{i}}{2^{n-\ell-2}}
&\le \frac{ n \binom{n-\ell-2}{n-d-2}}{2^{n-\ell-2}}\\
&\le \frac{n2^{(n-\ell-2)H(\frac{n-d-2}{n-\ell-2})}}{2^{n-\ell-2}}\\
&\le  \frac{2\alpha d2^{d(\alpha-\beta)H(1/2-\epsilon)}}{2^{d(\alpha-\beta)}}.
\end{align*}
And the above quantity goes to zero as $d \to \infty $ because $H(r)<1$ as long as $r \neq 1/2$. 
\end{proof}

\section{The least neighborly distributions}\label{sec:least}
As previously mentioned, we will construct a family of distributions $\{\mu_d\}_{d\in\mathbb{N}}$ which show that \cref{thm:neighborly,thm:density} are in some sense best possible. Before giving the construction, we need to establish the following proposition, which gives the reverse of the bound given by \cref{thm:WW} in \cref{sec:WW}. 
\subsection{A lower bound on the probability that a point is in the convex hull}\label{sec:WW'}

\begin{definition}\label{def:depth}
    Let $\mu$ be a probability distribution on $\Rl^d$ and $p$ a point in $\Rl^d$. The \emph{depth of $p$ in $\mu$} is defined to be
\[
\min\{\mu(H^+) \suchthat H^+ \text{ is a closed halfspace containing } p \}.
\]
\end{definition}

\begin{proposition}\label{prop:lower}
Let $\mu$ be an absolutely continuous probability distribution on $\Rl^d$. Let $S$ be a set of $n$ independent and identically distributed points from $\mu$. Let $p \in \Rl^d$ be a point and assume that the depth of $p$ in $\mu$ is greater than or equal to $a$. Then the probability that $\conv S$ contains $p$ is greater than or equal to
\begin{align*}
&(d+1)\binom{n}{d+1}\int_0^{a}(y^{n-d-1}+(1-y)^{n-d-1})y^d\ud y\\
&= \sum_{i=0}^{n-d-1}\binom{n}{i}a^i(1-a)^{n-i}+a^n\binom{n-1}{d}.
\end{align*}
\end{proposition}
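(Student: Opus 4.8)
The plan is to mirror the strategy used in the proof of \cref{thm:nonatomic}, but in reverse: instead of bounding the probability above via the Upper Bound Theorem for cyclic polytopes, we bound it below via the Lower Bound Theorem for simplicial polytopes. Concretely, take $N > n$ i.i.d. points $X_1,\dotsc,X_N$ from $\mu$, together with the point $p$ (which we may translate to the origin). Apply the Gale transform to the point configuration $\{p, X_1, \dotsc, X_N\}$ of $N+1$ points in $\Rl^d$ (or work with the affine Gale diagram); by the correspondence in \cite[5.4.1]{MR1976856}, subsets of size $n$ of the $X_i$ whose convex hull contains $p$ in its interior correspond to faces of the appropriate dimension of a simplicial polytope on the Gale side. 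The key point is that the depth hypothesis $\mathrm{depth}(p)\ge a$ forces $p$ to lie in many such convex hulls, and this should translate into a \emph{lower} bound on the number of faces of the Gale polytope, for which one uses the Lower Bound Theorem (Barnette), whose extremal configurations are stacked polytopes. Passing to the limit $N \to \infty$ should, as in \cref{thm:nonatomic}, collapse the combinatorial bound to the stated integral.

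Alternatively—and this may be cleaner—one can argue directly by integral geometry without Gale duality. The idea is: $p \in \conv(X_1,\dotsc,X_n)$ iff $p$ is not separated from the sample by a hyperplane, i.e. iff there is no closed halfspace $H^+$ with $p \in \mathrm{bd}\,H^+$... actually the sharper route is to condition on which of the $X_i$ are the vertices of a minimal simplex containing $p$. A simplex on $d+1$ of the points contains $p$ iff $p$ lies in that simplex; summing indicator variables and using that with probability one $p$ lies in the interior of the convex hull iff it lies in some $(d+1)$-subset simplex. The combinatorial identity to exploit is: for a point in general position, the number of $(d+1)$-subsets of $n$ points whose simplex contains $p$, as a function of the "orientation pattern" of the points, is governed by a Gale-type formula, and its expectation can be computed by integrating over the position of $p$ relative to a random hyperplane. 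The explicit worst case (equality) is the "balanced" configuration where a hyperplane through $p$ splits the mass in ratio $a : (1-a)$; this is exactly where the factor $(y^{n-d-1}+(1-y)^{n-d-1})y^d$ and the binomial-tail expression come from. I would verify the final equality
\[
(d+1)\binom{n}{d+1}\int_0^{a}(y^{n-d-1}+(1-y)^{n-d-1})y^d\,\mathrm{d}y
= \sum_{i=0}^{n-d-1}\binom{n}{i}a^i(1-a)^{n-i}+a^n\binom{n-1}{d}
\]
by a direct computation: expand, integrate term by term using $\int_0^a y^d(1-y)^{n-d-1}\,\mathrm{d}y$ as an incomplete Beta integral, and repeatedly integrate by parts, which telescopes into the binomial tail plus the boundary term $a^n\binom{n-1}{d}$.

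To make the lower bound rigorous via the second approach, the cleanest device is probably this: WLOG by absolute continuity and an affine change of coordinates, reduce to showing that for a measure whose depth at $o$ is \emph{exactly} $a$ (the extremal "halfspace-balanced" case minimizes the probability among all measures of depth $\ge a$, by a coupling/monotonicity argument), the probability equals the stated quantity; then for that extremal measure, compute the probability directly. For the extremal measure one can take $\mu$ supported so that a fixed hyperplane $H$ through $o$ has $\mu(H^+) = a$ on one side, and conditionally on each side $\mu$ is itself balanced about $o$ within that halfspace's "cone" — then an analogue of Wendel's theorem computes $\mathbb{P}(o \in \conv S)$ exactly by counting sign patterns: $o$ fails to be in the hull precisely when all points lie in a common halfspace through $o$, and the probability of each sign vector is $a^{(\#\text{one side})}(1-a)^{(\#\text{other})}$ up to the Wendel-type count. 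Summing gives $\sum_{i=0}^{n-d-1}\binom{n}{i}a^i(1-a)^{n-i}$ plus a correction term accounting for configurations where $o$ lies on the dividing hyperplane $H$, which is where $a^n\binom{n-1}{d}$ enters.

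The main obstacle will be making precise the claim that the halfspace-balanced configuration actually \emph{minimizes} the containment probability over all absolutely continuous $\mu$ with depth $\ge a$ — i.e. establishing the monotonicity/extremality that justifies reducing to the explicit case. One expects this from a "tilting" or rearrangement argument (pushing mass toward the minimizing halfspace can only decrease $\mathbb{P}(o\in\conv S)$), but stating and proving it carefully — especially handling the asymmetry between the two sides and the boundary hyperplane contribution — is the delicate part; the Gale/Lower-Bound-Theorem route has the advantage of sidestepping this by getting a combinatorial lower bound for \emph{every} configuration at once, at the cost of a more involved limiting computation to recover the exact constant.
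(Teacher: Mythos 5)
Both of your routes fall short of a proof, and the gap in each is the same: nothing in your argument actually converts the depth hypothesis into a quantitative lower bound. The Gale/Lower-Bound-Theorem route cannot work even in principle. In the proof of \cref{thm:nonatomic} the Upper Bound Theorem gives a bound on the number of relevant faces that is valid for \emph{every} configuration, which is exactly what an upper bound on the probability needs; dually, Barnette's theorem would give a face count valid for every configuration, hence a bound on $\prob(p\in\conv S)$ that is independent of $\mu$ and of $a$ --- but no such nontrivial bound exists, since the probability can be made arbitrarily small (indeed zero) when the depth of $p$ is small. Concretely, the minimal number of $(N-n-1)$-faces of a simplicial $(N-d-1)$-polytope on $N$ vertices grows only polynomially of low degree in $N$, while $\binom{N}{n}\sim N^n/n!$, so the limiting ratio as $N\to\infty$ is $0$ and the method yields only the trivial bound. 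Your second route places the entire burden on the claim that a ``halfspace-balanced'' measure minimizes $\prob(p\in\conv S)$ among absolutely continuous measures of depth $\ge a$; but that extremality statement is essentially the content of the proposition itself, you give no proof of it, and it is not even clear the infimum is attained by any measure (the paper only exhibits distributions that attain the bound asymptotically), so ``reduce to the extremal measure and compute'' is not available as stated. Your reading of the term $a^n\binom{n-1}{d}$ as coming from configurations with $p$ on the dividing hyperplane is also off: for absolutely continuous $\mu$ such events have probability zero; that term is simply the contribution of the $y^{n-d-1}\cdot y^d=y^{n-1}$ part of the integrand.

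What the paper actually uses, and what your proposal is missing, is the Wagner--Welzl representation $\prob(o\in\conv S)=2\binom{n}{d+1}\int_0^1 y^{n-d-1}h(y)\,\mathrm{d}y$ for a nonnegative density $h$ depending only on $\mu$, together with two facts from \cite{WW}: the symmetry $h(y)=h(1-y)$, and the exact identification $h(y)=\tfrac{d+1}{2}\min(y,1-y)^d$ for all $y\le a$ and $y\ge 1-a$ when the depth of the target point is at least $a$ (\cite[Theorem 3.6]{WW}, \cite[Lemma 4.33]{Wagner}). Dropping the nonnegative contribution of the middle range $a<y<1-a$ immediately gives the integral lower bound, and the binomial-tail form then follows from the incomplete-beta computation you correctly sketch at the end. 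So the closing identity check in your proposal is fine, but the probabilistic core of the proof is absent.
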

\begin{proof}
Note that it suffices to prove the statement when $p$ is the origin because otherwise we could translate $p$ and $\mu$ to reduce to this case. Let $p_{n,\mu}$ denote the probability that $\conv S$ contains the origin, where $S$ is a set of $n$ independent and identically distributed points from $\mu$. It is shown in \cite{WW} that there exists a function $h(y)$ (which depends only on $\mu$) such that 
\[
p_{n,\mu}= 2\binom{n}{d+1}\int_0^1y^{n-d-1}h(y) \ud y.
\]
For completeness, we give the definition of $h(y)$ from \cite{WW}: As in \cite[Section 2.2]{WW}, we choose some  absolutely continuous probability distribution $\tilde{\mu}$ on $\Rl^{d+1}$ such that the orthogonal projection of $\tilde{\mu}$ to the first $d$ coordinates is the distribution $\mu$ and we let $\tilde{\ell}$ be the $x_{d+1}$ axis in $\Rl^{d+1}$. Then as in \cite[Section 1.2]{WW}, we let $\sigma$ denote a \emph{$\tilde{\mu}$-random oriented simplex}, i.e. a $d$-simplex whose $d+1$ vertices are i.i.d. points from $\tilde{\mu}$ and one side of $\sigma $ is chosen as the \say{positive} side which is denoted $H^+(\sigma)$. Furthermore, we say that a directed line $\tilde{\ell}$ \emph{enters} an oriented simplex $\sigma$ if it intersects the relative interior of $\sigma$ and is directed from the positive to the negative side of $\sigma$. With this we define 
\[
H_{\tilde{\mu},\tilde{\ell}}:= \prob(\tilde{\ell} \text{ enters } \sigma \text{ and } \tilde{\mu}(H^+(\sigma))\le y).
\]
We then define 
\[
h(y):= h_{\tilde{\mu},\tilde{\ell}}(y):= \frac{\ud H_{\tilde{\mu},\tilde{\ell}}}{\ud y}.
\]
This completes the definition of $h(y)$, see \cite{WW} for more details. 

Now by \cite[Theorem 2.6]{WW}, $h(y) = h(1-y)$ and so 
\[
2\binom{n}{d+1}\int_0^1y^{n-d-1}h \ud y = 2\binom{n}{d+1}\int_0^{1/2}(y^{n-d-1}+(1-y)^{n-d-1})h \ud y.
\]
By \cite[Theorem 3.6]{WW} and the remarks following the proof of that theorem, because of the assumption that the depth of $o$ in $\mu$ is greater than or equal to $a$, the function $h$ satisfies $h = \frac{(d+1)}{2}\min(y,1-y)^{d}$ for $y \le a$ and $y \ge 1-a$. Alternatively, see \cite[Lemma 4.33]{Wagner} for a rigorously stated proof of this claim.
Therefore,
\begin{align*}
p_{n,\mu}&\ge (d+1)\binom{n}{d+1}\int_0^{a}(y^{n-d-1}+(1-y)^{n-d-1})y^d\ud y.
\end{align*}
We use \cite[6.6.4]{MR757537} to get
\[
\int_0^{a}(1-y)^{n-d-1}y^d\ud y  =\frac{(n-d-1)!d!}{n!} \sum_{i=0}^{n-d-1}\binom{n}{i}a^i(1-a)^{n-i}. 
\]
And since $\int_0^{a}y^{n-1}\ud y  =\frac{a^n}{n}$ and $\frac{a^n}{n} (d+1)\binom{n}{d+1}  = a^n \binom{n-1}{d}$,
\[
p_{n,\mu} \ge \sum_{i=0}^{n-d-1}\binom{n}{i}a^i(1-a)^{n-i}+a^n\binom{n-1}{d}.
\]
\end{proof}
We remark that a calculation similar to the one in the proof of \cref{prop:lower} was done in \cite[Theorem 4.32]{Wagner}. However, there is a mistake in that proof; the summation formula they get for the integral is incorrect. 

\subsection{A family of distributions essentially attaining the lower bound}\label{sec:bad}
Here is the definition of the family of distributions: Let $\epsilon_d:= \frac{1}{\sqrt{d}}$ (This choice is somewhat arbitrary). Let 
\[
f(x_1,\dotsc, x_d) := \frac{1}{(2\pi)^{d/2}}e^{-(1/2)(x_1^2+ \cdots+ x_d^2)}
\]
be the probability density function of the standard (mean zero and identity covariance matrix) Gaussian distribution on $\Rl^d$. Let $\mu_d$ be the distribution on $\Rl^d$ with density
\[
\big(1-\epsilon_d\big)f(x)+ \epsilon_d\frac{1_{\{\|x\|_2 \le \epsilon_d\}}}{V_d}
\]
where $V_d$ is the volume of the $d$-ball with radius $\epsilon_d$. In other words, $\mu_d$ is the combination of a Gaussian distribution having $1-\epsilon_d$ of the mass (we call it the \say{Gaussian part}) and the uniform distribution on the $d$-ball of radius $\epsilon_d$ (called the \say{ball part}) having the remaining mass. Note that each distribution $\mu_d$ is absolutely continuous. 
The following proposition shows that \cref{thm:density} is in some sense best possible. 
\begin{proposition}\label{prop:best1}
Let $\alpha>1$ and assume that $\beta>0$ satisfies $\beta > 2-\alpha$. Let $\{\mu_d\}_{d\in \mathbb{N}}$ be the family of probability distributions defined at the start of this section. If for each $d \in \mathbb{N}$ we let $n:=n(d) = \lfloor\alpha d\rfloor$ and let $S_n=\{X_1,\dotsc,X_n \}$ be a set of $n$ iid random points from $\mu_d$, then for any sequence $\ell:=\ell(d)$ with $\ell\sim \beta d$, the expected $\ell$-face density of $\conv S_n$ is $o(1)$ as $d \to \infty$. 
\end{proposition}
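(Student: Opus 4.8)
The plan is to show that $\conv(X_1,\dots,X_{\ell+1})$ fails to be a face of $\conv S_n$ with probability tending to $1$; since (exactly as in the proof of \cref{thm:density}) the expected $\ell$-face density of $\conv S_n$ equals the probability that $\conv(X_1,\dots,X_{\ell+1})$ is a face of $\conv S_n$, this yields the claim. By \cref{prop:face} it suffices to prove that
\[
\aff(X_1,\dots,X_{\ell+1})\cap\conv(X_{\ell+2},\dots,X_n)\neq\emptyset
\]
with probability $\to 1$. The driving observation is that the ``ball part'' of $\mu_d$ pins the flat $\aff(X_1,\dots,X_{\ell+1})$ to within $\epsilon_d$ of the origin: a single sample lies in the ball $\{\|x\|\le\epsilon_d\}$ with probability at least $\epsilon_d=d^{-1/2}$, so the probability that none of $X_1,\dots,X_{\ell+1}$ does so is at most $(1-\epsilon_d)^{\ell+1}=e^{-\Theta(\sqrt d)}\to 0$. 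Hence, with probability $\to 1$, we may condition on $X_1,\dots,X_{\ell+1}$ being affinely independent (which holds almost surely, since $\mu_d$ is absolutely continuous and $\ell<d$) with some $X_j$ satisfying $\|X_j\|\le\epsilon_d$; set $L:=\aff(X_1,\dots,X_{\ell+1})$.

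Next I would project out $L$. Let $\pi$ be the orthogonal projection of $\Rl^d$ onto the orthogonal complement of the direction space of $L$, identified with $\Rl^m$ where $m:=d-\ell$; then $\pi(L)$ is a single point $v_0$, and $\|v_0\|=\|\pi(X_j)\|\le\|X_j\|\le\epsilon_d$. The event above is equivalent to $v_0\in\conv(\pi(X_{\ell+2}),\dots,\pi(X_n))$. Conditionally on $X_1,\dots,X_{\ell+1}$, the points $Y_i:=\pi(X_i)$ for $i\ge\ell+2$ are i.i.d.\ from $\nu:=\pi_*\mu_d$, which, by the rotational symmetry of both parts of $\mu_d$, is an absolutely continuous distribution on $\Rl^m$, depends only on $m$, and is balanced about the origin. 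Moreover $\nu$ dominates $(1-\epsilon_d)$ times the standard Gaussian on $\Rl^m$, so every closed halfspace whose boundary hyperplane passes through $v_0$ has $\nu$-measure at least $(1-\epsilon_d)\Phi(-\epsilon_d)$, and consequently the depth of $v_0$ in $\nu$ is at least $(1-\epsilon_d)\Phi(-\epsilon_d)\to 1/2$.

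It remains to show $\prob(v_0\in\conv(Y_{\ell+2},\dots,Y_n))\to 1$. Here the hypothesis $\beta>2-\alpha$ enters: it is equivalent to $(\alpha-\beta)/2<\alpha-1$, so we may fix a constant $a<1/2$ with $(\alpha-\beta)a<\alpha-1$, and then for $d$ large the depth of $v_0$ in $\nu$ exceeds $a$ on the conditioning event. Applying \cref{prop:lower} with $N:=n-\ell-1$ points in dimension $m$ (and discarding all but the binomial-tail term of its bound),
\[
\prob\big(v_0\in\conv(Y_{\ell+2},\dots,Y_n)\,\big|\,X_1,\dots,X_{\ell+1}\big)\ \ge\ \sum_{i=0}^{N-m-1}\binom{N}{i}a^i(1-a)^{N-i}\ =\ \prob\big(\mathrm{Bin}(N,a)\le N-m-1\big).
\]
Since $N\sim(\alpha-\beta)d$ and $m\sim(1-\beta)d$, the threshold $N-m-1\sim(\alpha-1)d$ exceeds the mean $Na\sim(\alpha-\beta)a\,d$ by a $\Theta(d)$ margin, so by Hoeffding's inequality this probability is $1-e^{-\Omega(d)}$, uniformly over the conditioning. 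Integrating over $X_1,\dots,X_{\ell+1}$ and combining with the estimate of the first paragraph completes the proof.

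I expect the crux to be the coupling of the two roles played by the ``ball part'': it simultaneously (i) forces $\aff(X_1,\dots,X_{\ell+1})$ to pass within $\epsilon_d$ of the origin and (ii) keeps the projected distribution $\nu$ centrally symmetric, so that the projected point $v_0$ has depth essentially $1/2$ — after which \cref{prop:lower} reduces everything to a Wendel-type count whose success condition is precisely the ratio $(\alpha-\beta)/(1-\beta)>2$, i.e.\ $\beta>2-\alpha$. The supporting technical points needing care are making the depth estimate hold uniformly over the random flat $L$ rather than only at the origin, and checking that $\epsilon_d=d^{-1/2}$ is small enough to keep $v_0$ within an $o(1)$ neighborhood of the origin while still large enough that the ``some point in the ball'' event has probability $\to 1$.
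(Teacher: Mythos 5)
Your proposal is correct and follows essentially the same route as the paper: force the flat $\aff(X_1,\dots,X_{\ell+1})$ within $\epsilon_d$ of the origin via the ball part, project onto $L^\perp$, bound the depth of the projected point by roughly $1/2$ using the Gaussian part, and then invoke \cref{prop:lower}, with the hypothesis $\beta>2-\alpha$ entering exactly where it does in the paper. The only difference is cosmetic: you finish with a fixed depth constant $a<1/2$ and a Hoeffding bound on the binomial tail, whereas the paper extends the integral in \cref{prop:lower} to recover the Wendel-type sum and estimates it with entropy bounds on binomial coefficients.
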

\begin{proof}

By definition, the expected $\ell$-face density of $\conv S_n$ is equal to the probability that $\conv(X_1, \dotsc,X_{\ell+1})$ is a face of $\conv S_n$. Therefore, in order to show that the expected $\ell$-face density is $o(1)$, it will suffice to show that the probability that $\conv(X_1, \dotsc,X_{\ell+1})$ is not a face of $\conv S_n$ is $1-o(1)$.

First we will show that we can assume that at least one of the points $\{X_i\}_{i \in [\ell+1]}$ is sampled from the ball part of the distribution. Let $B$ be the event that at least one of the points $\{X_i\}_{i \in [\ell+1]}$ is sampled from the ball part of the distribution and $1_B$ the indicator function of the event $B$. Using that $\big(1+x/y \big)^y <e^x$, we have that
\[
\prob(\neg B) = \big(1-\epsilon_d\big)^{\ell+1} \le e^{-\epsilon_d(\ell+1)} =o(1).
\]
and so $\prob(B) = 1-o(1)$. This means that we ignore the case when event $B$ is not satisfied. In particular, we have
\begin{align*}
      \prob(\conv(X_1, \dotsc,X_{\ell+1}) \text{ is not a face})
      & \ge   \prob(\conv(X_1, \dotsc,X_{\ell+1}) \text{ is not a face}\cap B)
\end{align*}

and so it suffices to show that $\prob(\conv(X_1, \dotsc,X_{\ell+1}) \text{ is not a face} \cap B)=1-o(1)$. 

Define
\[ f(x_1,\dotsc, x_{\ell+1}):= \begin{cases} 
      1 & \text{ if } \conv(x_1,\dotsc,x_{\ell+1}) \text{ is not a face} \\
     0 &  \text{ otherwise}
      \end{cases}
\]
Then we have that
\begin{align*}
   \prob(\conv(X_1, \dotsc,X_{\ell+1}) \text{ is not a face}\cap B)&=  \int_{\Rl^d} \dotsc \int_{\Rl^d}1_B f(x_1,\dotsc, x_{\ell+1})\mu_d(\ud x_n)\dotsc \mu_d(\ud x_{1}).
\end{align*}
Letting $B' \subset \Rl^{d(\ell+1)}$ be the set of point sets satisfying $1_B=1$, we can rewrite the above as
\begin{align}\label{eq:int2}
   \prob(\conv(X_1, \dotsc,X_{\ell+1}) \text{ is not a face}\cap B) &= \int_{B'} \int_{\Rl^d} \dotsc \int_{\Rl^d} f(x_1,\dotsc, x_{\ell+1})\mu_d(\ud x_n)\dotsc \mu_d(\ud x_{1}).
\end{align}
For any fixed choice of points $x_{1}, \dotsc, x_{\ell+1}$, the inner integral
\begin{align}\label{eq:inner}
\int_{\Rl^d} \dotsc\int_{\Rl^d} f(x_1,\dotsc, x_{\ell+1})\mu_d(\ud x_n)\dotsc \mu_d(\ud x_{\ell+2})
\end{align}
is equal to the probability that $\conv(\{X_i\}_{i\in [\ell+2,n]}) \cap L\neq \emptyset$ where $L := \aff(x_1,\dotsc,x_{\ell+1})$. Under the assumption that the points $x_{1}, \dotsc, x_{\ell+1}$ satisfy $1_B = 1$, we will show that this probability is $1-o(1)$ and therefore that \cref{eq:inner} is $1-o(1)$ for any choice of $x_1,\dotsc x_{\ell+1}$ such that $1_B=1$. Let $\pi_L$ be the orthogonal projection of $\Rl^d$ to the subspace $L^\perp$ of dimension $d-\ell$ that is orthogonal to $L$. Let $\pi \mu_d$ denote the measure on $L^\perp$ which is the projection of $\mu_d$. The probability that $\conv(\{X_i\}_{i\in [\ell+2,n]}) \cap L\neq \emptyset$ is equal to the probability that $\conv(\pi_L X_{\ell+2}, \dotsc,\pi_L X_{n}) $ contains $\pi L$. 
Note that since at least one of the $x_i$, $1 \le i \le \ell+1$ is sampled from the ball part of the distribution, the distance from $L$ to the origin is at most $\epsilon_d$. We claim that this means that the depth of $\pi L$ in $\pi \mu_d$ is at least $(1-\epsilon_d)(1/2-\epsilon_d)$. In order to show this, we need to show that every hyperplane in $\pi_L\Rl^d$ through $\pi L$ has at least $(1-\epsilon_d)(1/2-\epsilon_d)$ of the mass of $\pi \mu_d$ on each side. We will actually prove the stronger statement that every hyperplane in $\Rl^d$ which contains $L$ has has at least $(1-\epsilon_d)(1/2-\epsilon_d)$ of the mass of $\mu_d$ on each side. The Gaussian measure of halfspace determined by a hyperplane at distance less than $\epsilon_d$ from the origin is greater than 
\[
1/2- \frac{1}{\sqrt{2\pi}} \int_{0}^{\epsilon_d} e^{-x^2/2 }\ud x \ge 1/2-\epsilon_d.
\]
And $1-\epsilon_d$ of the mass of $\mu_d$ is the standard Gaussian measure. So the claim that the depth of $\pi L$ in $\pi \mu_d$ is at least $(1-\epsilon_d)(1/2-\epsilon_d)\ge 1/2-2\epsilon_d$ follows.

Now by \cref{prop:lower}, the probability that $\conv(\pi_L X_{\ell+2}, \dotsc,\pi_L X_{n}) $ contains $\pi L$ is at least 
\[
(d-\ell+1)\binom{n-\ell-1}{d-\ell+1}\int_0^{1/2-2\epsilon_d}(y^{n-d-2}+(1-y)^{n-d-2})y^{d-\ell}\ud y.
\]
Using the formula in \cref{prop:lower}  along with the fact that $ \binom{n}{i}= \binom{n-1}{i-1} + \binom{n-1}{i}$ one can show that if the range of integration is extended from  $(0,1/2-\epsilon_d)$ to $(0,1/2)$, then 
\[
(d-\ell+1)\binom{n-\ell-1}{d-\ell+1}\int_0^{1/2}(y^{n-d-2}+(1-y)^{n-d-2})y^{d-\ell}\ud y=\frac{\sum_{i=0}^{n-d-2}\binom{n-\ell-2}{i}}{2^{n-\ell-2}}.
\]
Therefore, using that $\big(1+x/y \big)^y <e^x$, we have that 
\begin{align*}
    &(d-\ell+1)\binom{n-\ell-1}{d-\ell+1}\int_0^{1/2-2\epsilon_d}(y^{n-d-2}+(1-y)^{n-d-2})y^{d-\ell}\ud y\\ 
    &= \frac{\sum_{i=0}^{n-d-2}\binom{n-\ell-2}{i}}{2^{n-\ell-2}}  -(d-\ell+1)\binom{n-\ell-1}{d-\ell+1}\int_{1/2-2\epsilon_d}^{1/2}(y^{n-d-2}+(1-y)^{n-d-2})y^{d-\ell}\ud y \\ 
    &\ge  \frac{\sum_{i=0}^{n-d-2}\binom{n-\ell-2}{i}}{2^{n-\ell-2}} 
    - 2\epsilon_d(d-\ell+1)\binom{n-\ell-1}{d-\ell+1} \bigg(\frac{1}{2}+2\epsilon_d \bigg)^{n-\ell-2}\\
    &= \frac{\sum_{i=0}^{n-d-2}\binom{n-\ell-2}{i}}{2^{n-\ell-2}} 
    - 2\epsilon_d(d-\ell+1)\binom{n-\ell-1}{d-\ell+1} \frac{1}{2^{n-\ell-2}}\big(1+4\epsilon_d \big)^{n-\ell-2}\\
    &\ge  \frac{\sum_{i=0}^{n-d-2}\binom{n-\ell-2}{i}}{2^{n-\ell-2}} -2\epsilon_d(d-\ell+1)\binom{n-\ell-1}{d-\ell+1} \frac{1}{2^{n-\ell-2}}e^{4\epsilon_d(n-\ell-2)}.
\end{align*}
First we claim that the second term above is $o(1)$. To show this, note that since $\ell\sim \beta d$ and $n\sim \alpha d$, $\frac{d-\ell-1}{n-\ell-2} \sim \frac{1-\beta}{\alpha-\beta}$. And by the assumption that $\beta>2-\alpha$, we have that $\frac{1-\beta}{\alpha-\beta}<\frac{1-\beta}{2-2\beta}=1/2$ and so there exists $\epsilon>0$ such that for $d$ sufficiently large, $\frac{d-\ell-1}{n-\ell-2}<1/2-\epsilon$. Now using that $\binom{n}{k} \le 2^{nH(k/n)}$, we have that for $d$ sufficiently large,
\begin{align*}
 2\epsilon_d(d-\ell+1)\binom{n-\ell-1}{d-\ell+1} \frac{1}{2^{n-\ell-2}}e^{4\epsilon_d(n-\ell-2)} 
 &\le 2\epsilon_d(d-\ell+1)\frac{2^{(n-\ell-1)H(1/2-\epsilon)}  }{2^{n-\ell-2}}e^{8\sqrt{d}} \\
 &=o(1)
\end{align*}
because $H(1/2-\epsilon)<1$. This means that it suffices to show that $2^{\ell+2-n} \sum_{i=0}^{n-d-2}\binom{n-\ell-2}{i} = 1-o(1)$, or equivalently, that $2^{\ell+2-n} \sum_{i=n-d-1}^{n-\ell-2}\binom{n-\ell-2}{i} = 2^{\ell+2-n} \sum_{j=0}^{d-\ell-1}\binom{n-\ell-2}{j} = o(1)$. So by unimodality of the binomial coefficients, and again using that  $\binom{n}{k} \le 2^{nH(k/n)}$,
\begin{align*}
    2^{\ell+2-n} \sum_{j=0}^{d-\ell-1}\binom{n-\ell-2}{j} & \le  2^{\ell+2-n}(d-\ell)\binom{n-\ell-2}{d-\ell-1}\\
    &\le 2^{\ell+2-n}(d-\ell) 2^{(n-\ell-2)H(\frac{d-\ell-1}{n-\ell-2})}\\
    &\le 2^{\ell+2-n}(d-\ell) 2^{(n-\ell-2)H(1/2-\epsilon)}. 
\end{align*}
And the above quantity is $o(1)$ because $H(1/2-\epsilon)<1$. 

This shows that \cref{eq:int2} is the integral over $B'$ of a function which is uniformly bounded from below by a function which is equal to $1-o(1)$. Since the measure of $B'$ is equal to $\prob(B) = 1-o(1)$, this shows that \cref{eq:int2} is equal to $1-o(1)$ as desired.
\end{proof}

We can also prove a similar result for $k$-neighborliness of the distributions $\mu_d$. (The distributions $\mu_d\}_{d\in \mathbb{N}}$ in the following proposition are the distributions defined at the beginning of this section.)
\begin{proposition}
Let $\alpha>1$ and assume that $\beta>0$ satisfies $\alpha H(\beta/\alpha)+(\alpha-\beta)(H(\frac{\alpha-1}{\alpha-\beta})-1)>0$, or equivalently, $\frac{\alpha^\alpha}{(\alpha-1)^{\alpha-1}2^\alpha} > \frac{\beta^\beta(1-\beta)^{1-\beta}}{2^\beta} $. Let $\{\mu_d\}_{d\in \mathbb{N}}$ be the family of probability distributions defined at the start of this section. If for each $d$ we let $n:=n(d) = \lfloor\alpha d\rfloor$ and let $S_n=\{X_1,\dotsc,X_n \}$ be a set of $n$ iid random points from $\mu_d$, then for any sequence $k:=k(d)$ with $k\sim \beta d$, the expected number of subsets of $S_n$ of size $k$ which are not faces of $\conv S_n$ goes to infinity as $d \to \infty$. 
\end{proposition}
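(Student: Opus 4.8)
The plan is to run the argument in the proof of \cref{prop:best1} in reverse: instead of showing a certain probability is $1-o(1)$, I bound it from below by a quantity which, multiplied by $\binom{n}{k}$, diverges. Since the $X_i$ are i.i.d., the expected number of $k$-subsets of $S_n$ whose convex hull is not a face of $\conv S_n$ equals $\binom{n}{k}\,p_k$, where $p_k:=\prob(\conv\{X_1,\dots,X_k\}\text{ is not a face of }\conv S_n)$, so it suffices to prove $\binom{n}{k}\,p_k\to\infty$. We may assume $\beta<1$ (this is implicit in the hypothesis, since $\tfrac{\alpha-1}{\alpha-\beta}$ must lie in $[0,1]$), so that $k\le d$ and $0<n-d-2\le n-k$ for $d$ large; if instead $k>d$, then no proper $k$-subset of $S_n$ is a face and the count is already $\binom{n}{k}\to\infty$.

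For the lower bound on $p_k$ I would follow the proof of \cref{prop:best1} verbatim up to the point where \cref{prop:lower} is invoked, with $\ell+1$ replaced by $k$. Conditioning on the event $B$ that at least one of $X_1,\dots,X_k$ comes from the ball part, $\prob(\neg B)=(1-\epsilon_d)^k\le e^{-\epsilon_d k}$ with $\epsilon_d k\sim\beta\sqrt d\to\infty$, so $\prob(B)=1-o(1)$; writing $p_k\ge\prob(\text{not a face}\cap B)$ as an iterated integral, for every configuration of $\{X_1,\dots,X_k\}$ in $B'$ the inner integral over $X_{k+1},\dots,X_n$ equals, by \cref{prop:face}, the probability that $\conv(\pi_L X_{k+1},\dots,\pi_L X_n)$ contains $\pi L$, where $L=\aff(X_1,\dots,X_k)$ is a $(k-1)$-flat within distance $\epsilon_d$ of the origin, $\pi_L$ is the projection onto the $(d-k+1)$-dimensional space $L^\perp$, and---exactly as in \cref{prop:best1}---the depth of $\pi L$ in $\pi\mu_d$ is at least $1/2-2\epsilon_d$.

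The one genuine departure from \cref{prop:best1} comes next. Applying \cref{prop:lower} with ambient dimension $d-k+1$, sample size $n-k$ and depth parameter $a=1/2-2\epsilon_d$, and discarding the nonnegative term $a^{n-k}\binom{n-k-1}{d-k+1}$, the inner integral is at least $\sum_{i=0}^{n-d-2}\binom{n-k}{i}a^i(1-a)^{n-k-i}$, and I bound this from below simply by keeping the single summand $i=n-d-2$:
\[
p_k\ \ge\ \prob(B)\,\binom{n-k}{n-d-2}\bigl(\tfrac12-2\epsilon_d\bigr)^{n-d-2}\bigl(\tfrac12+2\epsilon_d\bigr)^{d-k+2}.
\]
In \cref{prop:best1} the corresponding integral over $(0,1/2-2\epsilon_d)$ was instead estimated by extending it to $(0,1/2)$ and subtracting a correction term; but in the present range of $(\alpha,\beta)$---which may have $\rho_N'(\alpha)<\beta<2-\alpha$---that correction has the same exponential order in $d$ as the main term and is inflated by a factor $e^{\Theta(\sqrt d)}$, so the subtraction yields only a useless negative bound. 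Retaining one summand sidesteps this: writing $\tfrac12\mp2\epsilon_d=\tfrac12(1\mp4\epsilon_d)$, the deviation of the depth from $1/2$ costs merely the factor $(1-4\epsilon_d)^{n-d-2}(1+4\epsilon_d)^{d-k+2}\ge e^{-8\epsilon_d n}=2^{-O(\sqrt d)}$, which is subexponential in $d$ and hence harmless.

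It then remains to show $\binom{n}{k}\binom{n-k}{n-d-2}\,2^{-(n-k)}\,2^{-O(\sqrt d)}\to\infty$. Using $\binom{n-k}{n-d-2}=\binom{n-k}{d-k+2}$ and $\binom{m}{j}\ge 2^{mH(j/m)}/(m+1)$, this quantity is at least $(n+1)^{-1}(n-k+1)^{-1}\,2^{E(d)}\,2^{-O(\sqrt d)}$, where $E(d):=nH(k/n)+(n-k)\bigl(H(\tfrac{d-k+2}{n-k})-1\bigr)$. Since $n\sim\alpha d$ and $k\sim\beta d$, we have $k/n\to\beta/\alpha$ and $\tfrac{d-k+2}{n-k}\to\tfrac{1-\beta}{\alpha-\beta}=1-\tfrac{\alpha-1}{\alpha-\beta}$, and $H(\tfrac{1-\beta}{\alpha-\beta})=H(\tfrac{\alpha-1}{\alpha-\beta})$ by the symmetry $H(p)=H(1-p)$; so the uniform continuity of $H$ gives $E(d)=d\bigl(\alpha H(\beta/\alpha)+(\alpha-\beta)(H(\tfrac{\alpha-1}{\alpha-\beta})-1)\bigr)+o(d)$, whose leading coefficient is exactly the hypothesis $\alpha H(\beta/\alpha)+(\alpha-\beta)(H(\tfrac{\alpha-1}{\alpha-\beta})-1)>0$. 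Hence $E(d)-O(\sqrt d)-O(\log d)\to+\infty$ and we are done. The step I expect to be the main obstacle is precisely this bookkeeping: one must arrange the estimate so that the unavoidable gap between the achievable depth $1/2-2\epsilon_d$ and the ideal $1/2$ enters only as a $2^{o(d)}$ factor, rather than overwhelming the exponential comparison between $\binom{n}{k}$ and $p_k$, and in particular the \emph{extend-and-subtract} device of \cref{prop:best1} must be replaced by the single-summand bound above.
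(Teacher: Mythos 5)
Your proposal is correct and follows essentially the same route as the paper: condition on the event that at least one of the $k$ points comes from the ball part, project onto $L^\perp$, use the depth bound $1/2-2\epsilon_d$ together with \cref{prop:lower}, and finish with the same entropy bookkeeping against $\binom{n}{k}$. The only divergence is the extraction step — you keep the single summand $i=n-d-2$ of the binomial sum, while the paper restricts the integral from \cref{prop:lower} to the thin interval $[1/2-3\epsilon_d,\,1/2-2\epsilon_d]$; both devices cost only a harmless $2^{-O(\sqrt d)}$ factor, and your observation that the extend-and-subtract estimate of \cref{prop:best1} would fail in this regime is precisely why the paper also avoids it.
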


\begin{proof}

Let $\{\mu_d\}_{d\in \mathbb{N}}$ be the distributions defined at the start of this section.

Let $B$ be the event that at least one of the points $\{X_i\}_{i \in [k]}$ is sampled from the ball part of the distribution and $1_B$ the indicator function of the event $B$. The same argument as in  the proof of \cref{prop:best1} shows that $\prob(B) = 1-o(1)$. We want to show that the expected number of subsets of size $k$ which are not faces goes to infinity. It will suffice to show that the expected number of subsets of size $k$ which contain at least one point sampled from the ball part and which are not faces goes to infinity. That is, it suffices to show that 
\[
\binom{n}{k}\cdot\prob(\conv(X_1, \dotsc,X_{k}) \text{ is not a face} \cap B) \to \infty.
\]
Define
\[ f(x_1,\dotsc, x_{\ell+1}):= \begin{cases} 
      1 & \text{ if } \conv(x_1,\dotsc,x_{k}) \text{ is not a face} \\
     0 &  \text{ otherwise}
      \end{cases}
\]
Then we have that
\begin{align*}
  &\binom{n}{k} \prob(\conv(X_1, \dotsc,X_{\ell+1}) \text{ is not a face}\cap B)\\ 
  &= \binom{n}{k} \int_{\Rl^d} \dotsc \int_{\Rl^d}1_B f(x_1,\dotsc, x_{\ell+1})\mu_d(\ud x_n)\dotsc \mu_d(\ud x_{1}).
\end{align*}
Letting $B' \subset \Rl^{d(\ell+1)}$ be the set of point sets satisfying $1_B=1$, we can rewrite the above as
\begin{align}\label{eq:int3}
  & \binom{n}{k}\prob(\conv(X_1, \dotsc,X_{k}) \text{ is not a face}\cap B)\nonumber\\ 
  &= \binom{n}{k}\int_{B'} \int_{\Rl^d} \dotsc \int_{\Rl^d} f(x_1,\dotsc, x_{k})\mu_d(\ud x_n)\dotsc \mu_d(\ud x_{1}).
\end{align}
For any fixed choice of points $x_{1}, \dotsc, x_{k}$, the inner integral
\begin{align}
\int_{\Rl^d} \dotsc\int_{\Rl^d} f(x_1,\dotsc, x_{k})\mu_d(\ud x_n)\dotsc \mu_d(\ud x_{k+1})
\end{align}
is equal to the probability that $\conv(\{X_i\}_{i\in [k+1,n]}) \cap L\neq \emptyset$ where $L := \aff(x_1,\dotsc,x_{k})$.

Under the assumption that the points  $x_{1}, \dotsc, x_{k}$ satisfy $1_B = 1$, we will show that $\binom{n}{k}$ times this probability approaches infinity. Let $\pi_L$ be the orthogonal projection of $\Rl^d$ to the subspace $L^\perp$ of dimension $d-k+1$ that is orthogonal to $L$. Let $\pi \mu_d$ denote the measure on $L^\perp$ which is the projection of $\mu_d$. The probability that $\conv(\{X_i\}_{i\in [k+1,n]}) \cap L\neq \emptyset$ is equal to the probability that $\conv(\pi_L X_{k+1}, \dotsc,\pi_L X_{n}) $ contains $\pi L$. 
Note that since at least one of the $x_i$, $1 \le i \le k$ is sampled from the ball part of the distribution, the distance from $L$ to the origin is at most $\epsilon_d$. We claim that this means that the depth of $\pi L$ in $\pi \mu_d$ is at least $(1-\epsilon_d)(1/2-\epsilon_d)$. In order to show this, we need to show that every hyperplane in $\pi_L\Rl^d$ through $\pi L$ has at least $(1-\epsilon_d)(1/2-\epsilon_d)$ of the mass of $\pi \mu_d$ on each side. We will actually prove the stronger statement that every hyperplane in $\Rl^d$ which contains $L$ has has at least $(1-\epsilon_d)(1/2-\epsilon_d)$ of the mass of $\mu_d$ on each side. The Gaussian measure of halfspace determined by a hyperplane at distance less than $\epsilon_d$ from the origin is greater than 
\[
1/2- \frac{1}{\sqrt{2\pi}} \int_{0}^{\epsilon_d} e^{-x^2/2 }\ud x \ge 1/2-\epsilon_d.
\]
And $1-\epsilon_d$ of the mass of $\mu_d$ is the standard Gaussian measure. So the claim that the depth of $\pi L$ in $\pi \mu_d$ is at least $(1-\epsilon_d)(1/2-\epsilon_d)\ge 1/2-2\epsilon_d$ follows.

Now by \cref{prop:lower}, the probability that $\conv(\pi_L X_{k+1}, \dotsc,\pi_L X_{n}) $ contains $\pi L$ is at least 
\[
(d-k+2)\binom{n-k}{d-k+2}\int_0^{1/2-2\epsilon_d}(y^{n-d-2}+(1-y)^{n-d-2})y^{d-k+1}\ud y.
\]
Now using the fact that $\big(1+x/y \big)^y > e^{xy/(x+y)}$, and $\binom{n}{k} = \binom{n}{n-k}$,
\begin{align*}
&\binom{n}{k}(d-k+2)\binom{n-k}{d-k+2}\int_0^{1/2-2\epsilon_d}(y^{n-d-2}+(1-y)^{n-d-2})y^{d-k+1}\ud y\\
& \ge \binom{n}{k}(d-k+2)\binom{n-k}{d-k+2}\int_{1/2-3\epsilon_d}^{1/2-2\epsilon_d}(y^{n-d-2}+(1-y)^{n-d-2})y^{d-k+1}\ud y\\
&\ge \binom{n}{k}(d-k+2)\binom{n-k}{d-k+2} \epsilon_d \Bigg(\frac{1}{2} - 3\epsilon_d  \Bigg)^{n-k-1}\\
& = \binom{n}{k}(d-k+2) \epsilon_d \binom{n-k}{n-d-2}\frac{1}{2^{n-k-1}} \bigg(1- 6\epsilon_d  \bigg)^{n-k-1}\\
& \ge\binom{n}{k} (d-k+2) \epsilon_d \binom{n-k}{n-d-2}\frac{1}{2^{n-k-1}} e^{-6\epsilon_d(n-k-1)/(6\epsilon_d+1)}.
\end{align*}
Now since $n \sim \alpha d$ and $k\sim \beta d $, we have that $\frac{n-d-2}{n-k} \sim \frac{\alpha-1}{\alpha -\beta}$. And using the fact that $\binom{n}{k} \ge (1/(n+1))2^{nH(k/n)}$, for $d$ sufficiently large the above is lower bounded by
\begin{align*}
  & \frac{1}{2}\cdot \frac{2^{\alpha d H(\beta/\alpha)} 2^{d(\alpha-\beta)H(\frac{\alpha-1}{\alpha-\beta})}}{2^{(\alpha-\beta)d}}\cdot \frac{(d-k+2) \epsilon_d e^{-6\epsilon_d(n-k-1)/(6\epsilon_d+1)}}{(n+1)(n-k+1)}\\  
  &= \frac{1}{2}\cdot 2^{d\big(\alpha H(\beta/\alpha)+(\alpha-\beta)(H(\frac{\alpha-1}{\alpha-\beta})-1)   \big)} \cdot \frac{(d-k+2) \epsilon_d e^{-6\epsilon_d(n-k-1)/(6\epsilon_d+1)}}{(n+1)(n-k+1)}. 
\end{align*}
And the above quantity goes to infinity as $d\to\infty$ because $\alpha H(\beta/\alpha)+(\alpha-\beta)(H(\frac{\alpha-1}{\alpha-\beta})-1)>0$ and the term $e^{-6\epsilon_d(n-k-1)/(6\epsilon_d+1)} $ is $\Omega(1/2^{\delta d})$ for all $\delta>0$.

We have shown that \cref{eq:int3} is the integral over a set of measure $1-o(1)$ of a function which is uniformly bounded from below by a function which approaches infinity as $d \to \infty$. This shows that the quantity in \cref{eq:int3} approaches infinity. 
\end{proof}

\paragraph{Acknowledgments.}
Thanks to Luis Rademacher for many helpful discussions and comments. 
This material is based upon work supported by the National Science Foundation under Grants CCF-1657939, CCF-1934568 and CCF-2006994.

\bibliographystyle{abbrv}
\bibliography{bib}

\end{document}